\newcommand{\HH}{\mathcal H}
\newcommand{\RR}{\mathbb R}
\newcommand{\TT}{\mathbb T}
\def\jump#1{{[\hspace{-2pt}[#1]\hspace{-2pt}]}}
\def\norm#1{\left\vert \left\vert #1  \right\vert \right\vert }
\def\seminorm#1{ \left\vert #1  \right\vert }
\newcommand{\St}{\mathcal{S}}
\def\tnorm#1{\left\vert \left\vert \left\vert #1  \right\vert \right\vert \right\vert}
\newtheorem{lem}{Lemma}
\newtheorem{corollary}{Corollary}
\newtheorem*{prop-non}{Proposition}
\newtheorem{theo}{Theorem}
\newtheorem*{theorem-non}{Theorem}
\title{On the global well-posedness of interface dynamics\\
for gravity Stokes flow}
\author{
	Francisco Gancedo
	\\{\footnotesize Departamento de An\'alisis Matemático \& IMUS}
	\\{\footnotesize Universidad de Sevilla}
	\\{\footnotesize Sevilla, Espa\~na}
	\\{\footnotesize email: {\it fgancedo@us.es}}
	\and 
	Rafael Granero-Belinch\'on
	\\{\footnotesize Departamento de Matem\'aticas, Estad\'istica y Computaci\'on}
	\\{\footnotesize Universidad de Cantabria}
	\\{\footnotesize Santander, Espa\~na}
	\\{\footnotesize email: {\it rafael.granero@unican.es}}
	\and 
	Elena Salguero
	\\{\footnotesize Max Planck Institute for Mathematics in the Sciences}
	\\{\footnotesize Leipzig, Deutschland}
	\\{\footnotesize email: {\it elena.salguero@mis.mpg.de}}
}
\begin{document}
\maketitle 

\begin{abstract}
In this paper we establish the global-in-time well-posedness for an arbitrary $C^{1+\gamma}$, $0<\gamma<1$, initial internal wave for the free boundary gravity Stokes system in two dimensions. This classical well-posedness result is complemented with a weak solvability result in the case of $C^\gamma$ or Lipschitz interfaces. Furthermore, we also propose and study several one-dimensional models that capture different features of the full internal wave problem for the gravity Stokes system and show that all of them present finite time singularities. This fact evidences the fine structure of the non-linearity in the full system that allows for the free boundary problem to be globally well-posed while simplifications of it blow-up in finite time. 
\end{abstract}

{\small
\tableofcontents}
\section{Introduction and main results}\label{s:introduction} 
The study of the dynamics of free boundary problems dates back, at least, to the works of Laplace (1776) and Lagrange (1781 and 1786) on the dynamics of water waves. In this problem, there is one fluid filling the time-dependent domain $\Omega(t)$ bounded by the moving interface $\Gamma(t)$. This fluid is assumed to be surrounded by vacuum. 

Since these pioneer works, many celebrated researchers have studied the case of an inviscid fluid with a free boundary. In particular, today is well-known that the one-phase Euler equations with moving free-surface are locally well-posed in Sobolev spaces if the pressure function satisfies the so-called Rayleigh-Taylor (RT) stability condition, that requires the sign of the normal derivative of the pressure to be negative, namely, 
$$
\left.\frac{\partial p}{\partial n(t)}\right|_{t=0} < 0
$$
on the initial surface $\Gamma(0)$ (see \cite{cordoba_interface_2010} and the references therein).

When two inviscid fluids are considered, the situation changes drastically as the dynamics naturally develop instabilities (\cite{lannes2013} and the references therein). In fact, the two-phase Euler equations are used as the basic model for the Rayleigh-Taylor and Kelvin-Helmholtz instabilities for fluids with different densities under gravity forces and fluids with  the same density but discontinuous velocities at the interface $\Gamma(t)$, respectively. Perturbations of the flat interface in the linear regime grow; the  nonlinearity is unable of stabilizing such a growth. As a result, this nonlinear two-fluid problem where the two fluids have constant (different) densities $\rho^+$ and $\rho^-$
\begin{subequations}\label{Euler}
\begin{alignat}{2}
\rho^\pm(u_t^\pm+(u^\pm\cdot\nabla)u^\pm)&=-\nabla p^\pm-(0,\rho^\pm)^T&&\quad\text{in}\quad \Omega^\pm(t)\,\,,\\
\nabla\cdot u^\pm&=0 &&\quad\text{in}\quad \Omega^\pm(t)\,\,,\\
\jump{p}&=0&&\quad \text{on}\quad \Gamma(t),\\
z_t&=u(z,t)&&\quad \text{on}\quad \Gamma(t),
\end{alignat}
\end{subequations}
where $\jump{f}=f^+-f^-\text{ on}\quad \Gamma(t)$, is ill-posed in Sobolev spaces when surface tension effects are neglected \cite{lannes2013}. 

A similar problem arises when the fluids fill a porous medium. In this case, the problem is known in the literature as the Muskat/interface Hele-Shaw problem \cite{Alazard,castro2013breakdown,castro2016,castro2012rayleigh,cordoba_contour_2007}. There, the system under study is given by Darcy's law
\begin{subequations}\label{Darcy}
\begin{alignat}{2}
u^\pm&=-\nabla p^\pm-(0,\rho^\pm)^T,&&\quad\text{in}\quad \Omega^\pm(t)\,\,,\\
\nabla\cdot u^\pm&=0 &&\quad\text{in}\quad \Omega^\pm(t)\,\,,\\
\jump{p}&=0&&\quad \text{on}\quad \Gamma(t),\\
z_t&=u(z,t)&&\quad \text{on}\quad \Gamma(t).
\end{alignat}
\end{subequations}
The two-phase Muskat problem can also be ill-posed if the RT condition is not satisfied \cite{cordoba_contour_2007} while the one-phase Muskat problem (in an unbounded domain, at least) satisfies the RT condition automatically. Furthermore, besides its own mathematical interest, the Muskat problem has been a successful benchmark problem for the free boundary Euler equations and conversely. In particular, the ideas leading to turning waves for the two-phase Muskat were also implemented in the case of water waves \cite{castro2012rayleigh} and the splash singularities in water waves \cite{castro2013finite} were later extended to the one-phase Muskat problem \cite{castro2016}.

Even if for many applications, the inviscid approximation is enough in practice, a better, more careful study requires the inclusion of viscosity effects (see \cite{granero-belinchon_well-posedness_2021} and the references therein). In this case, one has to study the free boundary problem for the Navier-Stokes equations
\begin{subequations}\label{NavierStokes}
\begin{alignat}{2}
\rho^\pm(u_t^\pm+(u^\pm\cdot\nabla)u^\pm)-\Delta u^\pm&=-\nabla p^\pm-(0,\rho^\pm)^T&&\quad\text{in}\quad \Omega^\pm(t)\,\,,\\
\nabla\cdot u^\pm&=0 &&\quad\text{in}\quad \Omega^\pm(t)\,\,,\\
\jump{(\nabla u +\nabla u^T-p\, \text{Id})n}&=0&&\quad \text{on}\quad \Gamma(t),\\
\jump{u}&=0&&\quad \text{on}\quad \Gamma(t),\\
z_t&=u(z,t)&&\quad \text{on}\quad \Gamma(t).
\end{alignat}
\end{subequations}
One of the classical references on such a system is \cite{Denisova}.  However, in the last year there has been some works studying the free boundary problem for such a system both in the two-phase and one-phase case \cite{NSdensity,guo_almost_2013,guo2018stability}.

After going to dimensionless variables and the Boussinesq approximation, system \eqref{NavierStokes} reads
\begin{subequations}\label{NavierStokes2}
\begin{alignat}{2}
\frac{1}{Pr}(u_t^\pm+(u^\pm\cdot\nabla)u^\pm)-\Delta u^\pm&=-\nabla p^\pm-(0,Ra\,\rho^\pm)^T&&\quad\text{in}\quad \Omega^\pm(t)\,\,,\\
\nabla\cdot u^\pm&=0 &&\quad\text{in}\quad \Omega^\pm(t)\,\,,\\
\jump{(\nabla u +\nabla u^T-p\, \text{Id})n}&=0&&\quad \text{on}\quad \Gamma(t),\\
\jump{u}&=0&&\quad \text{on}\quad \Gamma(t),\\
z_t&=u(z,t)&&\quad \text{on}\quad \Gamma(t),
\end{alignat}
\end{subequations}
where $Pr$ and $Ra$ are the Prandtl and Rayleigh dimensionless numbers \cite{grayer_ii_dynamics_2022,leblondthesis}. See \cite{DZ,BoussinesqDensity} for recent works on this problem. As a consequence of the model, in the asymptotic regime $1\ll Pr$, a good approximation is the two-phase Stokes system
\begin{subequations}\label{Stokes}
\begin{alignat}{2}
-\Delta u^\pm&=-\nabla p^\pm-(0,\rho^\pm)^T&&\quad\text{in}\quad \Omega^\pm(t)\,\,,\\
\nabla\cdot u^\pm&=0 &&\quad\text{in}\quad \Omega^\pm(t)\,\,,\\
\jump{(\nabla u +\nabla u^T-p\text{Id})n}&=0&&\quad \text{on}\quad \Gamma(t),\\
\jump{u}&=0&&\quad \text{on}\quad \Gamma(t),\\
z_t&=u(z,t)&&\quad \text{on}\quad \Gamma(t),
\end{alignat}
\end{subequations}
Furthermore, the previous system has been also derived from a microscopic formulation of sedimenting particles in a fluid \cite{hofer_2018_stokes} and from a Vlasov-Stokes kinetic system \cite{hofer_2018_stokes2}. This system and close variants (as the surface tension case) have been studied by many different researchers in the previous years \cite{free_boundary,cobb,grayer_ii_dynamics_2022,leblond,matioc_two-phase_2021,matioc_two-phase_2022,matioc_capillarity_2022,mecherbet_2019_stokes,mecherbet_2021_droplet,
mecherbet_few_2022,zheng2017local}. 

In this paper we study the case of two fluid with different constant densities evolving by \eqref{Stokes} in absence of surface tension in two spatial dimensions. We observe that in our paper \cite{GGS23} we proved that system \eqref{Stokes} can be equivalently written using the following contour dynamics formulation
\begin{align} \label{eq:A}
	z_t(\alpha,t) & =  (\rho^- - \rho^+)  \int_{\TT} \St(z(\alpha,t)-z(\beta,t)) \cdot \partial_\beta z^\perp (\beta,t) z_2(\beta,t)  d\beta,
\end{align}
where the so-called $x_1$-periodic Stokeslet reads 
\begin{equation*}
	\St(y) =   \frac{1}{8 \pi } \log \left(2(\cosh (y_2)-\cos( y_1)) \right) \cdot I
	- \frac{y_2}{8 \pi (\cosh (y_2) - \cos (y_1))} \left( \begin{array}{cc}
		-\sinh (y_2) &\sin (y_1) \\
		\sin (y_1) & 	\sinh (y_2)
	\end{array} \right).
\end{equation*}
This equation resembles the vortex patch equation \cite{bertozzi1993global,deemzabuskyvortex,verdera2023vortex}
\begin{align} \label{eq:B}
	z_t(\alpha,t) & =  -\omega_0\int_{\TT} \mathcal{V}(z(\alpha,t)-z(\beta,t)) \cdot \partial_\beta z(\beta,t)d\beta,
\end{align}
where the kernel reads 
\begin{equation*}
	\mathcal{V}(y) =   \frac{1}{4 \pi } \log \left(y_1^2+y^2_2\right) \cdot I
.
\end{equation*}
Even if (part of) the kernels involved are similar, at the linear level both equations behave very differently. Indeed, the presence of the $z_2$ term in \eqref{eq:A} coming from the gravity, imposes a crucial anisotropy in the problem \eqref{Stokes}. As a consequence of such a term, the linear operator in \eqref{eq:A} has order $-1$ while the linear operator in \eqref{eq:B} has order $0$.

A number of well-posedness results for the two-dimensional case without surface tension are available in the literature. On the one hand, for \eqref{Stokes} in a planar (and also three-dimensional) domain bounded in the vertical direction, Leblond established the global existence and uniqueness for bounded initial density \cite{leblond,leblondthesis}. In particular, such results cover the case of density patch described above. The same case but in a bounded planar domain was also proved by Antontsev, Yurinsky and Meirmanov \cite{free_boundary} for the case of a $C^2$ initial interface. However, the inherent anisotropy of the system, where the vertical direction plays a crucial role compared to the horizontal one, makes the case with an unbounded domain out of reach of the previous references. In the case of the whole plane, the only available well-posedness result is the one by Grayer II that is able to prove the global existence of solution for initial density that is in $L^\infty$ and with one domain bounded \cite{grayer_ii_dynamics_2022}. The hypothesis on the domain is crucial as it serves to control it due to the transport character of the conservation of mass equation for the density. In fact, such a bounded domain is propagated by the transport structure of the problem and allows the author to handle the logarithmic growth of the Stokeslet kernel. Part of our analysis relies on a new cancellation found in the Stokeslet that ensures that the velocity $u$ in our case has finite $L^p$ energy instead of linear or logarithmic growth.

Our main result in this paper is the following:
\begin{theo}[Global well-posedness of two-phase Stokes]\label{teo1}
	Let $z_0(\alpha) \in C^{1+\gamma}(\TT)$ be the initial data satisfying the arc-chord condition for the two-phase Stokes problem \eqref{Stokes}. Then, there exists a solution 
	$$ z(\alpha,t) \in C([0,T];C^{1+\gamma})$$
		for any $T>0$.
\end{theo}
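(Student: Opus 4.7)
The plan is to combine local well-posedness in $C^{1+\gamma}$ for the contour equation \eqref{eq:A} with global a priori bounds on the $C^{1+\gamma}$ norm and the arc-chord quantity $F(z)(\alpha,\beta) := |\beta|/|z(\alpha,t) - z(\alpha-\beta,t)|$, together with a continuation argument. Local existence would be obtained by regularizing \eqref{eq:A}, for instance by convolving the Stokeslet $\St$ with a smooth mollifier at scale $\ep > 0$; for fixed $\ep$, the regularized right-hand side is a Lipschitz map from the open subset of $C^{1+\gamma}(\TT)$ satisfying the arc-chord condition into itself, so Picard-Lindel\"of applies. Uniform-in-$\ep$ estimates on $\|z\|_{C^{1+\gamma}}$ and $\|F(z)\|_{L^\infty}$ on any interval $[0,T]$ then allow passing to the limit $\ep \to 0$ via Arzel\`a-Ascoli compactness to produce the claimed solution in $C([0,T]; C^{1+\gamma})$.

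The key structural point that makes global existence possible, rather than merely local, is that the linear operator in \eqref{eq:A} is of order $-1$, in contrast with the order $0$ operator of the vortex patch equation \eqref{eq:B}. This gain of one derivative allows one to express $\partial_t \partial_\alpha z$ as a zero-order singular integral acting on $\partial_\alpha z$ plus a smoother remainder, on which classical Calder\'on-Zygmund estimates in H\"older spaces apply. Concretely, I would differentiate \eqref{eq:A} in $\alpha$, split the kernel into a principal part near the diagonal $\alpha = \beta$ and a $C^\infty$ far-field contribution, and combine Tricomi-type H\"older bounds for the principal-value operator with direct estimates on the smooth remainder to obtain
\begin{equation*}
\frac{d}{dt}\|z(\cdot,t)\|_{C^{1+\gamma}} \le C\, P\!\left(\|z(\cdot,t)\|_{C^{1+\gamma}},\,\|F(z)(\cdot,t)\|_{L^\infty}\right)
\end{equation*}
for some polynomial $P$, with an analogous estimate for $\|F(z)\|_{L^\infty}$ coming from evaluating $z_t(\alpha) - z_t(\alpha-\beta)$ along the flow. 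A Gronwall-type argument closes both bounds on $[0,T]$ for any $T > 0$.

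The main obstacle, and the point where the fine structure of the Stokes kernel must be exploited, is the interaction between the logarithmic growth of $\St$ and the factor $z_2(\beta,t)$ in \eqref{eq:A}, which represents the absolute vertical position of the interface and has no a priori uniform bound. A naive estimate that pulls $\|z_2\|_{L^\infty}$ out of the integral produces a divergent logarithmic contribution from the kernel and fails to close the Gronwall inequality; this is precisely the difficulty that confines the Leblond and Antontsev-Yurinsky-Meirmanov results to vertically bounded domains. Here one must use the anti-symmetric structure of the Stokeslet together with the new cancellation identified in \cite{GGS23}, rewriting the integrand so that a derivative of $z_2$ (controlled by $\|z\|_{C^{1+\gamma}}$) effectively replaces $z_2$ itself. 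This reduces the velocity to an absolutely convergent $L^p$-type quantity, which is what ultimately makes the H\"older estimates close and prevents both the regularity of $z$ and the arc-chord from degenerating in finite time, yielding the global solution.
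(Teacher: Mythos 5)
Your proposal takes a genuinely different route from the paper. The paper does not work with the contour equation \eqref{eq:A} at all for the a priori bounds; instead it uses a Lagrangian flow-map approach at the PDE level. Concretely, it decomposes the density as $\rho=\rho^c+\rho^\infty$, where $\rho^\infty$ depends only on $x_2$ and therefore satisfies $\partial_{x_1}\rho^\infty=0$, so it contributes nothing to the velocity; what remains is a compactly supported $\rho^c$ whose support size is $O(\|z_2\|_{L^\infty})$. Combined with the $L^1$ bound on the modified Stokeslet $\tilde\St$ (Lemma \ref{lem:integrable}) this gives $\|u\|_{L^\infty}\lesssim\|\rho_0\|_{L^\infty}$ uniformly, hence linear-in-time growth of $\|z_2\|_{L^\infty}$, and then Calder\'on--Zygmund bounds give $\|\nabla u\|_{L^\infty}+[\nabla u]_{C^\gamma}\lesssim \|\rho^c\|_{L^p}\lesssim(1+t)^{1/p}$. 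Crucially, these bounds on $\nabla u$ depend only on $\|z_2\|_{L^\infty}$, not on the interface regularity, so the transport identities \eqref{eq:gradX}--\eqref{eq:gammaX} for the flow map close linearly in the top-order norm and Gronwall gives a global bound.

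This reveals a concrete gap in your argument. You claim a differential inequality
\begin{equation*}
\frac{d}{dt}\|z(\cdot,t)\|_{C^{1+\gamma}}\le C\,P\bigl(\|z(\cdot,t)\|_{C^{1+\gamma}},\|F(z)(\cdot,t)\|_{L^\infty}\bigr)
\end{equation*}
and say ``Gronwall closes.'' But Gronwall only gives a bound on every $[0,T]$ if $P$ is at most linear in $\|z\|_{C^{1+\gamma}}$ (with at most polynomial-in-$t$ coefficients). A direct Tricomi/Calder\'on--Zygmund estimate on the differentiated contour equation, as you sketch, naturally produces a $P$ that is at least quadratic in $\|z\|_{C^{1+\gamma}}$ (through the composition $\St(z(\alpha)-z(\beta))$ and the factor $\partial_\beta z^\perp$), which only yields local existence and possible finite-time blow-up. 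What makes the estimate linear is exactly the structural fact the paper isolates: the H\"older norm of the velocity gradient is controlled solely by $\|z_2\|_{L^\infty}$ and not by the smoothness of the interface, because the velocity solves the Stokes PDE with an $L^\infty$, effectively compactly supported source. You do not identify this mechanism, and your proposal does not explain why $P$ should be linear.

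A second, related issue is that your description of the ``new cancellation'' is not what the paper uses. You propose ``rewriting the integrand so that a derivative of $z_2$ effectively replaces $z_2$ itself.'' Replacing $z_2$ by $\partial_\alpha z_2$ would not help: $\partial_\alpha z_2\in C^\gamma$ is controlled by the very $C^{1+\gamma}$ norm you are trying to bound, so this feeds the top-order norm back into the estimate and cannot break the loop. The actual cancellation is different and essentially linear-algebraic at the level of the source term: the far-field density $\rho^\infty(x_2)$ is annihilated by $\partial_{x_1}$ in the stream-function equation $\Delta^2\psi=\partial_{x_1}\rho$, so the velocity is generated only by the bounded-support part $\rho^c$, and the remaining kernel $\tilde\St$ is integrable on $\TT\times\RR$ by a separate asymptotic analysis. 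If you want to carry out your contour-dynamics plan, you would need to import that structure into the contour integral (for instance by subtracting from $z_2(\beta)$ a suitable constant and showing the subtracted term integrates to zero against the kernel), and then show explicitly that the resulting kernel yields $C^\gamma$ bounds on $\nabla u|_{\Gamma}$ depending only on $\|z_2\|_{L^\infty}$ and the arc-chord constant, not on $\|z\|_{C^{1+\gamma}}$.
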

We observe that the arc-chord condition cannot blow up and that the initial data has arbitrary size. The proof of this result is based on new bounds for the velocity field for the case of density patches that exploits the structure of the kernels involved \cite{GGS23}. Once this regularity is controlled, using the contour dynamics equation for a general curve $z(\alpha,t)$ given in \cite{GGS23}, any higher order regularity can be controlled ($C^{k+\gamma}$ with $k\geq 2$).

The main importance of our contribution is three-fold. First, our result is the first one handling the case where the fluids fill an unbounded-in-$x_2$ domain without imposing at the same time some integrability or compactly support condition on the densities. This is a main difficulty in any rigorous proof of the result due to the growth of the Stokeslet (as $|x_2|$ in this direction) and the anisotropy of the system where the $x_2$ variable play a crucial role. Second, our result also supersedes \cite{free_boundary} in the sense that it requires barely $C^{1}$ initial data (any $C^{1+\gamma}$ is in fact enough) instead of $C^2$. In particular, our result covers initial interfaces with unbounded curvature and large slopes. Finally, our result applies regardless of the stratification of the fluids, \emph{i.e.} it proves that there is not Rayleigh-Taylor sign condition required to define global solutions and also regardless of the size and geometry of the initial data. Furthermore, as observed in \cite{leblondthesis}, the $C^{1+\gamma}$ regularity of the velocity field can be used to propagate the $C^\gamma$ or $W^{1,\infty}$ norm of the interface and, as a consequence, to define \emph{Lagrangian solutions} of low regularity. Due to the new estimates that we proved for the velocity such Lagrangian solutions are also weak solutions of the system. We refer to \cite{inversi} for a study between the different concepts of solutions.

Now we turn our attention to several one-dimensional models of an internal wave in gravity Stokes flow (see Section 3). In particular, we propose the following nonlocal and nonlinear partial differential equations:
\begin{subequations}\label{quadratic}
	\begin{align} 
		u_t + u u_\alpha =- \Lambda^{-1} (u) & \quad \alpha\in \TT, \, t\in[0,T],\\
		u_0(\alpha,0) = u_0(\alpha) & \quad   \alpha \in \TT ,
	\end{align}
\end{subequations}

\begin{subequations} \label{cubiclocal}
	\begin{align}
		u_t + u^2 u_\alpha =- \Lambda^{-1} (u)&\quad \alpha\in \TT, \, t\in[0,T],\\
		u_0(\alpha,0) = u_0(\alpha)& \quad \alpha \in \TT ,
	\end{align}
\end{subequations}

\begin{subequations}\label{cubicnonlocal2}
	\begin{align}
		u_t + \frac{1}{2}  \mathcal{H}\left( u^2\right) u_\alpha = - \Lambda^{-1} (u)&\quad \alpha\in \TT, \, t\in[0,T],\\
		u_0(\alpha,0) = u_0(\alpha)& \quad \alpha \in \TT .
	\end{align}
\end{subequations}

The operator $\Lambda^{-1} (u)$ in the periodic one-dimensional torus is defined as 
\begin{equation}\label{lambda-1}
	\Lambda^{-1} (u) \, (\alpha)= - \frac{1}{2\pi} \int_{-\pi}^\pi \log \left(4 \sin^2 \left(  \frac{\alpha-\beta}{2}\right) \right) u(\beta) d \beta.
\end{equation}
This operator was studied in more detail in the authors' previous work \cite{GGS23}.
The Hilbert transform in the periodic one-dimensional torus is defined as
\begin{equation}\label{hilbert}
	\HH(u)(\alpha) = \frac{1}{2\pi} \int_{-\pi}^\pi \cot \left(\frac{\alpha-\beta}{2} \right) u(\beta) d\beta. 
\end{equation}
Also, related to the previous two operators, the fractional Laplacian $\Lambda u = (-\Delta)^{1/2}$ is defined as 
\begin{equation}\label{lambda}
	\Lambda(u)(\alpha) = \frac{1}{2\pi} \int_{-\pi}^\pi  \frac{u(\alpha)-u(\beta)}{2\sin^2 \left(  \frac{\alpha-\beta}{2}\right)}   d\beta. 
\end{equation}
Note that 
$$ \partial_\alpha\Lambda^{-1 } (u) = -\HH(u), \quad \partial_\alpha \HH(u) = \Lambda(u).$$

For all these one-dimensional systems we prove the following (roughly stated) result \footnote{See below for a more precise statement.}:

\begin{theo}[Global well-posedness vs. finite time singularities for 1D models]\label{teo2}
Let $u_0\in H^n$ for $n$ high enough be the initial data for \eqref{quadratic}, \eqref{cubiclocal} or \eqref{cubicnonlocal2}. Then, if $u_0$ is small enough in appropriate Sobolev spaces the solution emanating from it is globally defined. At the contrary, if the initial data $u_0$ is large enough, the solution blows up in finite time. 
\end{theo}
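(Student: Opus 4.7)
The three results follow the same overall scheme, but the algebraic form of the transport term makes each case slightly different. I would first establish local well-posedness in $H^n$ for all three equations by Picard iteration: the source $-\Lambda^{-1}u$ is a smoothing (hence bounded) operator on $H^n$ for mean-zero data, and the nonlinearities $uu_\alpha$, $u^2u_\alpha$, $\tfrac{1}{2}\HH(u^2)u_\alpha$ are handled by Moser/Kato--Ponce commutator estimates, producing solutions $u\in C([0,T^*);H^n)$ together with the usual blow-up criterion $T^*<\infty \Rightarrow \limsup_{t\to T^*}\|u(t)\|_{H^n}=\infty$. The mean of $u$ is preserved by all three flows (since the three nonlinearities are in divergence form and $\Lambda^{-1}$ of a mean-zero function has zero mean), so $\Lambda^{-1}u$ stays well defined.

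For small-data global existence, my plan is a bootstrap argument driven by the $H^n$ energy identity. Taking $\partial_\alpha^n$ of the equation, pairing with $\partial_\alpha^n u$ and integrating by parts leads to
\begin{equation*}
\tfrac{d}{dt}\|u\|_{H^n}^2 + c\,\|u\|_{H^{n-1/2}}^2 \le C\,\|u\|_{H^n}^{p+2},
\end{equation*}
with $p=0$ for \eqref{quadratic} and $p=1$ for \eqref{cubiclocal}--\eqref{cubicnonlocal2}; the second term on the left comes from the positivity of the Fourier symbol $1/|k|$ of $\Lambda^{-1}$. For data small in $H^n$, the right-hand side is superlinear in $\|u\|_{H^n}$, so a continuity argument propagates smallness in time, defeats the blow-up criterion, and yields a global solution.

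For blow-up with large data, the plan is to track $u_\alpha$ along the transport characteristic $\dot X = V(u)(X)$, with $V(u)$ equal to $u$, $u^2$ or $\tfrac{1}{2}\HH(u^2)$ in the three cases. Differentiating the equation in $\alpha$ and using $\partial_\alpha\Lambda^{-1}=-\HH$, I obtain a Riccati-type differential inequality for $v(t)=u_\alpha(X(t;\alpha_0),t)$,
\begin{equation*}
\dot v + c\,v^2 \le \HH(u)(X),
\end{equation*}
for the quadratic model, with analogous bounds containing an extra factor of $u\circ X$ for \eqref{cubiclocal} and a nonlocal correction involving $\HH(u\,u_\alpha)$ for \eqref{cubicnonlocal2}. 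Choosing $u_0$ with a sufficiently negative slope at a point $\alpha_0$ (and, for the cubic models, the appropriate sign of $u_0$ or of $\HH(u_0^2)$ near $\alpha_0$), and bounding the source term by the local-in-time $L^\infty$ norm of $u$ (itself controlled by a lower-order energy), one compares $v$ with the explicit ODE $\dot w=-cw^2+M$ and concludes $v\to-\infty$ in finite time, which contradicts $u\in C^1$.

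The main obstacle is that the damping $-\Lambda^{-1}$ is of order $-1$, i.e.\ half a derivative weaker than the transport nonlinearity at the $H^n$ energy level. Consequently small-data global existence cannot rely on damping alone and must instead extract its gain from the strict superlinearity of the nonlinear estimate, which places a nontrivial constraint on the smallness threshold and will most likely dictate the "appropriate Sobolev space" in which smallness must be imposed. On the blow-up side, the key technical point is to retain uniform a priori control on $\HH(u)(X(t))$ all the way up to the ODE blow-up time without invoking higher-order norms that may themselves already be singular.
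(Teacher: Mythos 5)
Your small-data argument has a genuine gap. You write the estimate in the form
\begin{equation*}
\frac{d}{dt}\|u\|_{H^n}^2 + c\,\|u\|_{H^{n-1/2}}^2 \le C\,\|u\|_{H^n}^{p+2},
\end{equation*}
and claim that the superlinearity of the right-hand side, together with a continuity argument, propagates smallness globally. It does not: the damping term $\|u\|_{H^{n-1/2}}^2$ is in a \emph{weaker} norm than $\|u\|_{H^n}^2$ (on mean-zero functions $\|u\|_{\dot H^{n-1/2}} \le \|u\|_{\dot H^n}$), so it cannot absorb the right-hand side, and the bare inequality $\dot E \lesssim E^{1+\epsilon}$ only yields a lifespan $T \sim E_0^{-\epsilon}$ that is long but finite for small data. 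Superlinearity alone never converts local into global existence. The paper closes the argument with a completely different mechanism: Duhamel's formula is used to extract polynomial decay of $\|u(t)\|_{L^2}$ from the linear semigroup $e^{-\Lambda^{-1}t}$ (rates $(1+t)^{-2.25}$ resp.\ $(1+t)^{-1.25}$), the weighted quantity $\tnorm{u}=\sup_t\bigl((1+t)^{\sigma}\|u\|_{L^2}+\|u\|_{\dot H^n}\bigr)$ is introduced, and the high-order energy estimate is closed because the decaying $L^2$ factor produced by interpolation makes the nonlinear contribution time-integrable (e.g.\ $(1+s)^{-2.25\cdot 5/8}$ for the quadratic model, with $2.25\cdot 5/8>1$). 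Your scheme never exploits the decay of the semigroup and therefore cannot close for any of the three models.

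On the blow-up side, your plan for the quadratic model is close in spirit, but your source-term bound is wrong: after differentiating you need to control $\HH(u)$ at the tracked point, and the Hilbert transform is not bounded on $L^\infty$, so $\|\HH u\|_{L^\infty}\lesssim\|u\|_{L^\infty}$ fails. The paper avoids this entirely by a one-sided structural estimate: at the minimizing point $x_t$ of $u_x$, the explicit positivity of the logarithmic kernel and the fact that $u_x(x_t)-u_x(x_t-y)\le 0$ give $\Lambda^{-1}u_x(x_t)\ge\log(4)\,m(t)$ directly, yielding the clean Riccati inequality $\dot m\le -m^2-\log(4)m$ with no $L^\infty$ a priori bound on $\HH u$. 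For the two cubic models your proposal stops at a plan and does not engage with the real obstruction: differentiating $u_t+u^2u_x=-\Lambda^{-1}u$ along characteristics gives a Riccati term with coefficient $2u(X)$ of indeterminate sign, so tracking the slope does not close. The paper instead introduces a translated variable $W=u^2(\cdot+y(t),t)$ and a weighted integral functional $L(t)=\int_{-1}^1(-W+W(0))(|x|^{-\delta}-1)\operatorname{sign}(x)\,dx$, proves $\dot L\gtrsim L^2-\|u_0\|_{L^2}^2$ by integration by parts and Jensen, and for the nonlocal cubic model uses odd symmetry, the Cordoba--Cordoba--Fontelos inequality and the Lyapunov functional $J(t)=\int_0^\infty u^2/|x|^{1+\delta}\,dx$. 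These are structurally different arguments that your sketch does not reproduce or replace.
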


The finite-time blow-up for the 1D models is attained following the ideas in \cite{AC2021,CCF2005,CCF2006,DDL2009}.

In this sense, our second result shows that the fine structure of the nonlinearity is required to have globally defined smooth solutions for the free boundary. Indeed, we show that when only some of the nonlinear interactions are considered, the linear damping is unable to dominate the dynamics and large solutions blow up.

\section{Global well-posedness of the internal wave for the gravity Stokes system}
As in our previous work \cite{GGS23}, we consider two incompressible, viscous and immiscible fluids filling the domain $\TT \times \RR$. Both fluids are separated by the curve 
$$\Gamma(t)=\{ (z_1(\alpha,t),z_2(\alpha,t)); \quad \alpha \in [-\pi,\pi], \quad z(\alpha+ 2 \pi k,t) = (2 \pi k,0) + z(\alpha,t)  \}.$$
Then the upper fluid fills the domain $\Omega^{+}(t)$, while the lower fluid lies in $\Omega^{-}(t)$. Thus, the problem that we consider reads 
\begin{subequations}\label{stokes}
	\begin{align}
		-\Delta u^\pm&=-\nabla p^\pm-\rho^\pm(0,1)^t,&&\quad x\in\Omega^\pm(t),t\in[0,T],\\
		\nabla\cdot u^\pm&=0,&&\quad x\in\Omega^\pm(t),t\in[0,T],\\
		\jump{-pI + \mu ((\nabla u +(\nabla u)^T)/2)}\cdot(\partial_\alpha z)^\perp&= 0,&&\quad x\in\Gamma(t),t\in[0,T],\\
		\jump{u}&=0,&&\quad x\in\Gamma(t),t\in[0,T],\\
		z_t&= u(z,t), &&\quad  t\in[0,T],\\
		z&= z_0, &&\quad t =0 .
	\end{align}
\end{subequations}
We observe that there exists a constant $M$ big enough such that 
$$
\TT \times[M,+\infty)\subset\Omega^+(t)\quad\mbox{and}\quad
\TT \times(-\infty,-M]\subset\Omega^-(t).
$$

We recall that the velocity field can be expressed as a convolution of the density function with the so-called $x_1$-periodic Stokeslet
\begin{align} \label{eq:1}
	u & = {(\nabla^\perp \Delta^{-2} \nabla^\perp)} (0,\rho)^t = \St \ast (0,\rho)^t
\end{align}
where
\begin{equation}\label{eq:stokeslet}
	\St(y) =   \frac{1}{8 \pi } \log \left(2(\cosh (y_2)-\cos( y_1)) \right) \cdot I
	- \frac{y_2}{8 \pi (\cosh (y_2) - \cos (y_1))} \left( \begin{array}{cc}
		-\sinh (y_2) &\sin (y_1) \\
		\sin (y_1) & 	\sinh (y_2)
	\end{array} \right).
\end{equation}
See our previous work \cite{GGS23} for a rigorous derivation of $\St$. A direct calculation shows that the components $S_{1,1}(y)$ growth as $|y_2|$ if $y_2\to\infty$.
\begin{lem}\label{lem:integrable}
The Stokeslet applied to the gravity force is integrable in the vertical strip $\mathbb{T}\times\mathbb{R}$.
\end{lem}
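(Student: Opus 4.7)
The plan is to explicitly identify the vector field $f(y):=\St(y)\cdot(0,1)^t$ whose integrability over $\TT\times\RR$ is claimed, and verify it separately near the origin and in the far field. Its two components read
$$f_1(y)=-\frac{y_2\sin y_1}{8\pi(\cosh y_2-\cos y_1)},\qquad f_2(y)=\frac{1}{8\pi}\log\!\bigl(2(\cosh y_2-\cos y_1)\bigr)-\frac{y_2\sinh y_2}{8\pi(\cosh y_2-\cos y_1)}.$$

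Near $y=0$, the Taylor expansion $\cosh y_2-\cos y_1=\tfrac12(y_1^2+y_2^2)+O(|y|^4)$ immediately shows that $f_1$ is bounded while $f_2$ has only an integrable logarithmic singularity of order $\tfrac{1}{4\pi}\log|y|$, so $f$ is integrable on any compact neighborhood of the origin. For the far field $|y_2|\to\infty$, I would use $\cosh y_2=\tfrac12 e^{|y_2|}(1+e^{-2|y_2|})$ and $\sinh y_2=\mathrm{sgn}(y_2)\tfrac12 e^{|y_2|}(1-e^{-2|y_2|})$ to obtain
$$\log\!\bigl(2(\cosh y_2-\cos y_1)\bigr)=|y_2|+O(e^{-|y_2|}),\qquad \frac{y_2\sinh y_2}{\cosh y_2-\cos y_1}=|y_2|+O(|y_2|e^{-|y_2|}),$$
uniformly in $y_1\in\TT$. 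The crucial cancellation is that the two linear-in-$|y_2|$ leading terms of $f_2$ appear with opposite signs and cancel exactly, leaving $f_2(y)=O(|y_2|e^{-|y_2|})$; a parallel expansion of the denominator yields $f_1(y)=O(|y_2|e^{-|y_2|})$. Integrating these bounds over $\TT\times\{|y_2|>1\}$ gives a convergent integral, and combining with the logarithmic piece near the origin proves the lemma.

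The main obstacle is pinning down the far-field cancellation: both $\tfrac{1}{8\pi}\log(2(\cosh y_2-\cos y_1))$ and $\tfrac{y_2\sinh y_2}{8\pi(\cosh y_2-\cos y_1)}$ grow linearly in $|y_2|$, and it is precisely because they appear in $f_2$ with opposite signs and their sum is exponentially small that $\St\cdot(0,1)^t$ is integrable—\emph{unlike} $\St$ itself or its first column, whose diagonal entry $S_{1,1}$ grows as $|y_2|/(4\pi)$ as noted right before the statement. This asymmetry between the horizontal and vertical directions is exactly the structural cancellation that powers the new velocity bounds announced in the introduction.
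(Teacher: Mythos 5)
Your proof is correct and identifies exactly the same decomposition the paper uses: the same two kernel components, the same split into a neighborhood of the origin (where the kernel is the classical $\log|y|$ singularity) and the far field $|y_2|\to\infty$ (where the linear growth of $\log(\cosh y_2-\cos y_1)$ is exactly cancelled by the linear growth of $y_2\sinh y_2/(\cosh y_2-\cos y_1)$). The one genuine difference is how you establish the far-field decay of the second component. The paper computes $\partial_{y_2}\tilde\St_2=-y_2(1+\cosh y_2\cos y_1)/(\cosh y_2-\cos y_1)^2$, compares it with $\int_{y_2}^\infty s/\cosh s\,ds$ via L'H\^opital, and concludes a decay $\lesssim e^{-y_2/2}$; this implicitly uses that $\tilde\St_2\to 0$ so that the $0/0$ form is valid, which in turn rests on the very cancellation being quantified. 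You instead expand $\cosh y_2$ and $\sinh y_2$ directly in $e^{\pm|y_2|}$, cancel the leading $|y_2|$ terms algebraically, and read off $f_2=O((1+|y_2|)e^{-|y_2|})$ uniformly in $y_1$. Your route is more elementary, avoids the L'H\^opital comparison, and incidentally yields a sharper decay rate ($e^{-|y_2|}$ up to polynomial factors rather than $e^{-|y_2|/2}$); since only exponential integrability is needed, both versions suffice for the lemma, but the direct expansion arguably makes the ``crucial cancellation'' the paper invokes explicit rather than a by-product of an asymptotic comparison.
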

\begin{proof}
	From the explicit expression of $\St$ in \eqref{eq:stokeslet}, 
	\begin{align*}
		u_1(x,t) &= - \frac{1}{8\pi}\int_{\TT \times \RR} \frac{y_2 \sin (y_1)}{ \cosh(y_2)-\cos(y_1)} \rho(x-y,t)dy, \\
		u_2(x,t) & = \frac{1}{8\pi} \int_{\TT \times \RR} \log \left(2(\cosh (y_2)-\cos( y_1)) \right) \rho(x-y,t)  dy \\
		& \quad -  \frac{1}{8\pi} \int_{\TT \times \RR} \frac{y_2 \sinh (y_2)}{ \cosh(y_2)-\cos(y_1)} \rho(x-y,t)dy, \\
		& = \frac{1}{8\pi} \int_{\TT \times \RR} \left( \log \left(2(\cosh (y_2)-\cos( y_1)) \right)  -\frac{y_2 \sinh (y_2)}{ \cosh(y_2)-\cos(y_1)} \right)  \rho(x-y,t) dy .
		%& \quad +\frac{1}{8\pi} \int_{\TT \times \RR} \log(2)  \rho^c_M(t) dy 
	\end{align*}
	%In the last term we only keep $\rho^c_M(t)$ since $\tilde\rho^c$ has zero-mean. 
	In compact notation, 
	$$ u = \tilde\St \ast \rho,$$
	where
	$$ \tilde\St_1(y) = \frac{y_2 \sin (y_1)}{ \cosh(y_2)-\cos(y_1)}$$ 
	and 
	$$ \tilde\St_2(y) =  \left( \log \left(\cosh (y_2)-\cos( y_1) \right)  -\frac{y_2 \sinh (y_2)}{ \cosh(y_2)-\cos(y_1)} \right).$$
	At this point, we will study the behavior of these kernels at $0$ and $\infty$, where they can develop singularities.
	
	In a neighborhood $B_0$ of  $y = 0$, the $x_1$-periodic Stokeslet behaves like the classical Stokeslet in the 2D plane and it is easy to check that it is integrable (it behaves as $\log|y|$), so that in particular 
	$$ \norm{\tilde\St_1}_{L^1(B_0)} < \infty, \quad \norm{\tilde\St_2}_{L^1(B_0)} < \infty.$$
	
	At $|y_2| \to \infty$, it is clear that
	$$ |\tilde\St_1(y)| \lesssim e^{-|y_2|} .$$
	Besides, when $y_2 \to +\infty$,
	$$ \tilde\St_2(y) \sim -\cos(y_1) \int_{y_2}^\infty  \frac{s}{\cosh (s)} ds.$$
	Notice that, if $y_1$ is fixed such that $\cos(y_1) \neq 0$,
	$$ \lim\limits_{y_2 \to +\infty} \frac{\tilde\St_2(y)}{-\cos(y_1) \int_{y_2}^\infty \frac{s}{\cosh(s)} ds } =  \lim\limits_{y_2 \to +\infty} \frac{\partial_{y_2}\tilde\St_2(y)(y_2)}{\cos(y_1) \frac{y_2}{\cosh(y_2)}} = 1,$$
	where 
	\begin{align*}
		\partial_{y_2}\tilde\St_2(y) &= -\frac{y_2\cosh(y_2)}{\cosh(y_2)-\cos(y_1)} +\frac{y_2\sinh^2(y_2)}{(\cosh(y_2)-\cos(y_1))^2} \\
		& = -\frac{y_2(1+\cosh(y_2)\cos(y_1))}{(\cosh(y_2)-\cos(y_1))^2}.
	\end{align*}
	When $\cos(y_1) = 0$, 
	$$\lim\limits_{y_2 \to +\infty} \frac{F_2(y_2)}{ \int_{y_2}^\infty \frac{s}{\cosh(s)} ds } =  0.$$
	This translates into exponential decay in $y_2$ of $\tilde\St_2(y_2)$. Namely, 
	$$ |\tilde\St_2(y)| \lesssim  e^{-\frac{y_2}{2}} .$$
	The same analysis holds for the case $y_2 \to -\infty$.
	Therefore, we have proved that 
	\begin{equation}\label{L1kernel}
		\norm{\tilde\St_1}_{L^1(\TT \times \RR)} < \infty, \quad   \norm{\tilde\St_2}_{L^1(\TT \times \RR)} < \infty.
	\end{equation}
\end{proof}

This particular behavior of the kernel for the gravity forcing will have consequences in the boundedness of the velocity field and its derivatives, which at the same time will imply regularity of the free interface. With this in mind, we prove the following result:
\begin{lem}\label{lem:velintegrable}
The velocity $u$ solving \eqref{Stokes} is integrable in our vertical strip $\mathbb{T}\times\mathbb{R}$.
\end{lem}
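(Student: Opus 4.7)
The plan is to decompose the density $\rho$ into a stratified background and a compactly supported perturbation, and exploit the $y_1$-symmetries of the Stokeslet to kill the bulk contribution. First, I would define the reference profile $\bar\rho(y) := \rho^+$ for $y_2 > 0$ and $\bar\rho(y) := \rho^-$ for $y_2 < 0$, and set $\tilde\rho := \rho - \bar\rho$. Since the interface satisfies $\Gamma(t) \subset \TT \times [-M, M]$, the perturbation $\tilde\rho$ is bounded and supported in that horizontal strip, hence $\tilde\rho \in L^1(\TT \times \RR) \cap L^\infty(\TT \times \RR)$.

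Next, I would show that $\tilde\St \ast \bar\rho \equiv 0$. Since $\bar\rho(x - y)$ depends only on $x_2 - y_2$, Fubini reduces the convolution to integrating $\tilde\St(y_1, y_2)$ in $y_1$ first. The first component $\tilde\St_1(y) = \frac{y_2 \sin y_1}{\cosh y_2 - \cos y_1}$ is odd in $y_1$, so its $y_1$-integral vanishes identically, giving $\tilde\St_1 \ast \bar\rho = 0$. For the second component, I would combine the classical identities
\begin{equation*}
\int_{-\pi}^{\pi} \log\bigl(2(\cosh y_2 - \cos y_1)\bigr) \, dy_1 = 2\pi|y_2|, \qquad \int_{-\pi}^{\pi} \frac{dy_1}{\cosh y_2 - \cos y_1} = \frac{2\pi}{\sinh|y_2|},
\end{equation*}
together with $y_2 \sinh y_2 / \sinh|y_2| = |y_2|$, to conclude $\int_{-\pi}^{\pi} \tilde\St_2(y_1, y_2) \, dy_1 = 0$ for every $y_2 \neq 0$. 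This forces $\tilde\St_2 \ast \bar\rho = 0$ as well, so the bulk stratification contributes nothing to the velocity.

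Finally, it suffices to estimate $u = \tilde\St \ast \tilde\rho$. Since $\tilde\St \in L^1(\TT \times \RR)$ by Lemma \ref{lem:integrable} and $\tilde\rho \in L^1(\TT \times \RR)$ by the first step, Young's convolution inequality delivers
\begin{equation*}
\|u\|_{L^1(\TT \times \RR)} \leq \|\tilde\St\|_{L^1(\TT \times \RR)} \, \|\tilde\rho\|_{L^1(\TT \times \RR)} < \infty,
\end{equation*}
which is the desired integrability.

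The main obstacle is pinpointing the exact horizontal cancellation that annihilates the non-integrable bulk part of $\rho$; without the vanishing mean of $\tilde\St_2$ in $y_1$ at each height, the logarithmic growth of the Stokeslet against a bounded but non-integrable density would spoil any $L^1$ bound on $u$, and one could at best recover $u \in L^\infty$. The cancellation is precisely the structural feature of the gravity Stokeslet that distinguishes the present setting from the vortex patch analogue in \eqref{eq:B}.
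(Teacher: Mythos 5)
Your argument is correct and proves the lemma, but the mechanism you use to kill the far-field contribution is genuinely different from the paper's. The paper splits $\rho = \rho^c + \rho^\infty$ with $\rho^\infty$ vanishing on the strip $|x_2| \leq 2\|z_2(t)\|_{L^\infty}$, and observes that $\rho^\infty$, being a function of $x_2$ alone, can be absorbed into the pressure (equivalently $\partial_{x_1}\rho^\infty = 0$, so $\rho^\infty$ never enters $\Delta^2\psi = \partial_{x_1}\rho$); what remains is the compactly supported $\rho^c$ to which Young's inequality applies. You instead take the flat ground state $\bar\rho$ and annihilate its convolution \emph{at the kernel level}: $\tilde\St_1$ is odd in $y_1$, and
\begin{equation*}
\int_{-\pi}^{\pi} \tilde\St_2(y_1,y_2)\,dy_1 \;=\; 2\pi|y_2| \;-\; 2\pi\,\frac{y_2\sinh y_2}{\sinh|y_2|} \;=\; 0 \qquad (y_2\neq 0),
\end{equation*}
so that, by Fubini (licit since $\tilde\St\in L^1(\TT\times\RR)$ and $\bar\rho\in L^\infty$), $\tilde\St \ast \bar\rho \equiv 0$. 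Both proofs implement the same invariance, namely that a purely stratified density produces hydrostatic pressure and no velocity, but your version makes the cancellation structure of the periodic gravity Stokeslet explicit and dovetails directly with the estimates of Lemma~\ref{lem:integrable}; the paper's pressure argument is shorter and avoids the integral identities. One caveat: your computation requires the factor $2$ inside the logarithm, i.e.\ the kernel exactly as it appears in the displayed formula for $u_2$; the abbreviated expression written for $\tilde\St_2$ in the paper drops that $2$, which would leave a nonvanishing constant $-2\pi\log 2$ in the horizontal mean, so be sure to carry the factor consistently throughout.
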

\begin{proof}
We split the density as
	\begin{equation*}
		\rho = \rho^c(x_1,x_2,t) + \rho^\infty(x_2,t),
	\end{equation*}
where 
\begin{equation*}
	\rho^c = \left\{
	\begin{array}{ll}
		\rho^+, & z_2(\alpha,t) \leq x_2 \leq 2 \norm{z_2(t)}_{L^\infty}, \\
		\rho^-, &  -2 \norm{z_2(t)}_{L^\infty} \leq x_2 \leq z_2(\alpha,t), \\
		0 & \text{otherwise}
	\end{array}
	  \right.
\end{equation*}
and 
\begin{equation*}
	\rho^\infty = \left\{
	\begin{array}{ll}
		\rho^+, & x_2 > 2 \norm{z_2(t)}_{L^\infty},\\
		\rho^-, &  x_2 < -2 \norm{z_2(t)}_{L^\infty}, \\
				0 & \text{otherwise}.
	\end{array}
	\right.
\end{equation*}
Then, at the level of the stream function,
\begin{equation*}
	 \Delta^2 \psi =  \partial_{x_1} \rho^c + \partial_{x_1} \rho^\infty.
\end{equation*}
Note that $\partial_{x_1} \rho^\infty = 0$ in the weak sense, as $\rho_\infty$ only depends on $x_2$ and $t$. Then, the velocity becomes
\begin{equation}\label{eq:vel}
	u(x,t) = \tilde\St \ast  \rho^c(x,t). 
\end{equation}
This trick can also be seen adding $\rho^{\infty}$ to the pressure as $\nabla q(x,t)=(0,\rho^\infty(x_2,t))$, for $q$ a continuous function.
As a consequence,
$$
\|u\|_{L^p}\leq \|\rho_0\|_{L^\infty}\|\tilde\St\|_{L^1}(4\norm{z_2(t)}_{L^\infty})^{1/p}.
$$
\end{proof}
The shape of the gravity force allows to deal with the growth of the two-dimensional Stokeslet. 

Equipped with these estimates, let us proceed to prove the main result of this paper.
\begin{proof}[Proof of Theorem \ref{teo1}]
	
	Let us consider a Lagrangian approach: let $X(a,t)$ be the trajectory with initial data $X(a,0) = a$ driven by the velocity field $u(x,t)$ as in \eqref{eq:1}. Then, the free boundary is transformed as 
	$$ X(z_0(\alpha),t) = z(\alpha,t).$$
	The velocity of the trajectories is 
	\begin{equation*}
		 \frac{d}{dt}X(a,t) = u(X(a,t),t) 
	\end{equation*}
	and the gradient evolves as
	$$ \frac{d}{dt} \nabla_a X(a,t) = \nabla u(X(a,t),t) \nabla_a X(a,t).$$
	Consequently, 
	\begin{equation}\label{eq:X}
		\|X(a,t)\|_{L^\infty}\leq \|X_0(a)\|_{L^\infty} + \int_0^t\|u(X(a,s),s)\|_{L^\infty}ds,
	\end{equation}
\begin{equation}\label{eq:gradX}
		  \exp \left( -\int_0^t \norm{\nabla u(s)}_{L^\infty}  ds  \right)\leq \norm{\nabla X(t)}_{L^\infty} \leq  \exp \left( \int_0^t \norm{\nabla u(s)}_{L^\infty}  ds  \right) 
\end{equation}
	and similarly
	\begin{align}\label{eq:gammaX}
		\frac{d}{dt}\seminorm{\nabla X}_{\gamma} &\leq \norm{\nabla u}_{L^\infty} \seminorm{\nabla X}_{\gamma} + \seminorm{\nabla u}_{\gamma} \norm{\nabla X}_{L^\infty}^{1+\gamma} \nonumber \\
		& \leq \norm{\nabla u}_{L^\infty} \seminorm{\nabla X}_{\gamma} +  \seminorm{\nabla u}_{\gamma} \left( \ \exp \left(  \int_0^t \norm{\nabla u(s)}_{L^\infty}  ds \right)\right)^{1+\gamma}.
	\end{align}

Control of the supremum of the free boundary is given by control of $\norm{u}_{L^\infty}$, due to \eqref{eq:X}. Indeed, we only need to control the second component $X(z_0(\alpha),t)_2 = z_2(\alpha,t)$ in $L^\infty$, which is the only one that could be unbounded, namely
$$\frac{d}{dt} z_2(\alpha,t) = u_2(z(\alpha,t),t),$$
so  that
$$\norm{z_2(t)}_{L^\infty} \leq \norm{z_2(0)}_{L^\infty}+\int_0^t\norm{u(s)}_{L^\infty}ds.$$
We observe that Lemma \ref{lem:integrable} gives the uniform control 
\begin{equation}\label{eq:uinfty}
	\norm{u}_{L^\infty} \lesssim \norm{\tilde\St}_{L^1} \norm{\rho_0}_{L^\infty},
\end{equation}
thus
$$ \norm{z_2}_{L^\infty} \leq \norm{z_0}_{L^\infty}+ C \norm{\rho_0}_{L^\infty}t. $$

Now, we note from \eqref{eq:gradX} and \eqref{eq:gammaX}, that control of $\norm{\nabla u}_{L^\infty}$ and $\seminorm{\nabla u}_{\gamma}$ translates into control of $\seminorm{X}_{C^{1+\gamma}}$. Furthermore, the lower bound on the derivative of the trajectories ensures that the arc-chord condition does not collapse.

Firstly, 
\begin{align*}
	\norm{\nabla u}_{L^\infty}  \leq \norm{\nabla u}_{W^{1,p}} = \norm{\nabla u}_{L^p} + \norm{\nabla^2 u}_{L^p}
\end{align*}
and 
$$  \seminorm{\nabla u}_{\gamma} \leq  \norm{\nabla^2 u}_{L^p}$$
for $2 < p < \infty$ and $ \gamma = 1-\frac{2}{p}$.
Furthermore, 
$$ \norm{\nabla^2 u}_{L^p} = \norm{\nabla^2 \partial_{x_1} \nabla^{\perp} \Delta^{-2} (\rho^c)}_{L^p} \leq C\norm{\rho^c}_{L^p} \leq C(\norm{\rho_0}_{L^\infty}) \norm{z_2}_{L^\infty}^{1/p}\leq C(1+t)^{1/p},$$
by Calderon-Zygmund theory, since $\nabla^2  \nabla^{\perp} \Delta^{-2} \nabla^\perp$ is a 0-th order kernel. The $L^p$ norm of the gradient of the velocity is estimated by Gagliardo-Niremberg interpolation as 
$$ \norm{\nabla u}_{L^p} \lesssim \norm{u}_{L^2}^{4p/(5p-2)} \norm{\nabla^2 u }_{L^p}^{p/(5p-2)}\leq C(1+t)^{1/p}$$
so the control of $	\norm{\nabla u}_{L^\infty}$ and  $\seminorm{\nabla u}_{\gamma}$ holds for every positive time $t >0$. 

We can conclude then that 
$$
X(a,t) \in C([0,T]; C^{1+\gamma})
$$
and the associated free boundary
$$ z(\alpha,t) \in C([0,T]; C^{1+\gamma})$$
for  $0 <\gamma <1$ and any $T>0$, which provides the existence of global regular solutions and in particular of the nonlocal contour dynamics system given in \eqref{eq:A}.
\end{proof}

\begin{corollary}Let $z_0(\alpha)$ be the initial data satisfying the arc-chord condition for the two-phase Stokes problem \eqref{Stokes}. Assume that $z_0(\alpha) \in C^{\gamma}(\TT)$, then, there exists a weak solution 
	$$ z(\alpha,t) \in C([0,T];C^{\gamma})$$ for any $T>0$. Similarly, assume that $z_0(\alpha) \in W^{1,\infty}(\TT)$, then, there exists a weak solution 
	$$ z(\alpha,t) \in C([0,T];W^{1,\infty})$$ for any $T>0$.
\end{corollary}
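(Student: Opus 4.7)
My plan is to inherit directly from the argument for Theorem \ref{teo1}, observing that the velocity estimates derived there depend only on $\|\rho_0\|_{L^\infty}$ and a bound for $\|z_2\|_{L^\infty}$, and not on any $C^{1+\gamma}$ regularity of the interface. Thus the velocity bounds $\|u\|_{L^\infty}\lesssim 1$ (from Lemma \ref{lem:velintegrable}) and $\|\nabla u\|_{L^\infty}\lesssim (1+t)^{1/p}$ (from the Calderón--Zygmund plus Gagliardo--Nirenberg estimates on $\nabla^2\nabla^{\perp}\Delta^{-2}\rho^c$) are available as soon as one has a density patch configuration with $\|z_2\|_{L^\infty}<\infty$. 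This will be the pillar of the argument: the interior regularity of $u$ is essentially decoupled from the regularity of $\Gamma(t)$.

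The proof I have in mind then runs through the Lagrangian reformulation exactly as before. I would first construct the flow map $X(a,t)$ from the divergence-free velocity $u$; because $\nabla u\in L^\infty_t L^\infty_x$ locally in time, classical Cauchy--Lipschitz provides a unique bi-Lipschitz flow with
$$e^{-\int_0^t\|\nabla u(s)\|_{L^\infty}ds}\leq \seminorm{\nabla X(t)}_{L^\infty}\leq e^{\int_0^t\|\nabla u(s)\|_{L^\infty}ds},$$
and similarly for $\nabla X^{-1}$. Next I would \emph{define} the interface by $z(\alpha,t):=X(z_0(\alpha),t)$. Since composition of a $C^\gamma$ (resp. $W^{1,\infty}$) function with a bi-Lipschitz map preserves the class, $z(\cdot,t)\in C^\gamma(\TT)$ (resp. $W^{1,\infty}(\TT)$) with quantitative bounds. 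The arc-chord condition is propagated because the bi-Lipschitz bounds on $X$ prevent both collapse and blow-up of $|z(\alpha,t)-z(\beta,t)|/|\alpha-\beta|$ (resp. $|\alpha-\beta|^\gamma$). Moreover, the ODE $\tfrac{d}{dt}z_2=u_2(z,t)$ closes as in Theorem \ref{teo1} to give $\|z_2(t)\|_{L^\infty}\leq \|z_0\|_{L^\infty}+C\|\rho_0\|_{L^\infty}t$, so the whole bootstrap is self-consistent.

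The remaining task, and the one I expect to require the most care, is to justify that such Lagrangian interfaces give rise to \emph{weak} solutions of \eqref{stokes} rather than merely kinematic objects. The strategy is to write the density as $\rho(x,t)=\rho^+\mathbf{1}_{\Omega^+(t)}+\rho^-\mathbf{1}_{\Omega^-(t)}$, recover $u$ from $\rho$ via the Stokeslet convolution $u=\tilde\St\ast\rho^c$, and then verify the transport identity $\partial_t\rho+\nabla\cdot(u\rho)=0$ in $\mathcal D'$. Because $u$ is Lipschitz (hence its regular Lagrangian flow coincides with the flow used to define $z$), this follows from the DiPerna--Lions theory applied to the bounded divergence-free field $u$ and the $L^\infty$ density. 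The momentum equation is then a pointwise consequence of the representation $u=\tilde\St\ast\rho^c$, which encodes \eqref{stokes}\,(a)--(d) through the construction of the Stokeslet in \cite{GGS23}. The main delicate point is that for $C^\gamma$ interfaces with $\gamma<1$ the contour dynamics equation \eqref{eq:A} cannot be used in a classical sense since $\partial_\beta z$ is not defined; this is precisely why the Eulerian/Lagrangian formulation, rather than the parametric one, is the correct framework for stating the corollary.
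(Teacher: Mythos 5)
Your proposal correctly identifies the two structural pillars that the paper itself uses: the velocity estimates of Lemma~\ref{lem:integrable} and Lemma~\ref{lem:velintegrable} (and the Calder\'on--Zygmund bound on $\nabla^2 u$) depend only on $\|\rho_0\|_{L^\infty}$ and $\|z_2\|_{L^\infty}$, not on any H\"older continuity of $\nabla z$; and the resulting bi-Lipschitz flow propagates the $C^\gamma$ and $W^{1,\infty}$ classes by composition. Your invocation of the transport identity to recover a distributional solution also plays the role that the citation of \cite{inversi} plays in the paper (though since $u$ is Lipschitz, classical Cauchy--Lipschitz theory already suffices; DiPerna--Lions is not needed).

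There is, however, a genuine gap in the existence step. You \emph{define} $z(\alpha,t):=X(z_0(\alpha),t)$ where $X$ is the flow of the velocity $u$, but $u$ is not given in advance: $u=\tilde\St\ast\rho^c$ where $\rho^c$ is built from the domains $\Omega^\pm(t)$, which are in turn cut out by the unknown interface $z(\cdot,t)$. This is the nonlinear coupling that an existence argument must break. Your remark that "the whole bootstrap is self-consistent" is an \emph{a priori} estimate on a hypothetical solution, not a construction. The paper closes this loop in one line by mollifying $z_0$, applying the already-proved Theorem~\ref{teo1} to the regularized (hence $C^{1+\gamma}$) data, and passing to the limit using the uniform-in-approximation bounds furnished by Lemma~\ref{lem:velintegrable}; the limit is a Lagrangian density-patch solution with integrable velocity and hence a weak solution by \cite{inversi}. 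Without this approximation-and-compactness step (or some explicit fixed-point scheme at the $C^\gamma$/$W^{1,\infty}$ level), your argument only verifies properties a solution would possess; it does not produce one.
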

\begin{proof}
As noted in \cite{inversi}, any Lagrangian density patch solution with integrable velocity is also a weak (distributional) solution. Thus, to obtain the previous result it is enough mollify the initial data $z_0$ and invoke Theorem \ref{teo1} and Lemma \ref{lem:velintegrable}.
\end{proof}

\section[Global well-posedness vs. finite time singularities in several one-dimensional models]{Global well-posedness vs. finite time singularities in several one-dimensional models of the internal waves for the gravity Stokes system}

System \eqref{quadratic} appears as an asymptotic model of Euler alignment for a specific kernel. Indeed, the Euler alignment system \cite{ST} reads
\begin{align*}
	\rho_t+(v\rho)_\alpha&=0,\\
	v_t+vv_\alpha&=\int \phi(x-y)(v(y)-v(\alpha))\rho(y)dy.
\end{align*}
If we now make the ansatz
$$
\rho=1+\varepsilon^2h, \quad v=\varepsilon u,
$$
we find
\begin{align*}
	\varepsilon^2 h_t+\varepsilon(u(1+\varepsilon^2 h))_\alpha&=0,\\
	\varepsilon u_t+\varepsilon^2 u u_\alpha&=\varepsilon\int \phi(x-y)(u(y)-u(\alpha))dy+\varepsilon^3\int \phi(x-y)(u(y)-u(\alpha))h(y)dy.
\end{align*}
Neglecting terms of order $\varepsilon^2$ we find that the previous system decouples 
\begin{align*}
	\varepsilon h_t+u_\alpha&=0,\\
	u_t+\varepsilon u u_\alpha&=\int \phi(x-y)(u(y)-u(\alpha))dy.
\end{align*}
If we now take $\phi$ the kernel associated to $\Lambda^{-1}$ we derive \eqref{quadratic}.

System \eqref{cubiclocal}  is closer to the gravity Stokes system due to the degree of the nonlinearity. Furthermore, system \eqref{cubicnonlocal2} arises from the truncation of the contour dynamics form of the gravity Stokes system up to some cubic remainder. Recall the gravity Stokes equation for a graph initial interface in the stable regime of the densities (see \cite{GGS23} for more details): 

\begin{align*}\label{eq:grafo}
	h_t(\alpha,t) &= \frac{\bar\rho}{2 \pi} \int_\TT \log \left(4 \sin^2\left(\frac{ \alpha-\beta}{2} \right) \right) h(\beta)   [ 1+h_\alpha(\alpha) h_\alpha (\beta) ]   d \beta \nonumber\\
	& \quad  +  \frac{\bar\rho}{2 \pi} \int_\TT \log \left(\frac{\sinh^2 \left(\frac{h(\alpha,t)-h(\beta,t)}{2} \right)}{\sin^2\left(\frac{ \alpha-\beta}{2} \right)}+1 \right) h(\beta)   [ 1+h_\alpha(\alpha) h_\alpha (\beta) ]   d \beta \nonumber\\
	&\quad+  \frac{\bar\rho}{2 \pi} \int_\TT  \frac{h(\beta) (h(\alpha)-h(\beta) )}{ 2 \left( \sinh^2 \left(\frac{h(\alpha,t)-h(\beta,t)}{2} \right)+\sin^2\left(\frac{ \alpha-\beta}{2} \right) \right)}   \left[ (h_\alpha(\alpha)h_\alpha(\beta)-1) \sinh(h(\alpha)-h(\beta))\right] d\beta\nonumber\\
	&\quad+  \frac{\bar\rho}{2 \pi} \int_\TT \frac{h(\beta) (h(\alpha)-h(\beta) )}{2 \left( \sinh^2 \left(\frac{h(\alpha,t)-h(\beta,t)}{2} \right)+\sin^2\left(\frac{ \alpha-\beta}{2} \right) \right)}  \left[(h_\alpha (\alpha) + h_\alpha (\beta) ) \sin (\alpha-\beta)   \right] d\beta.
\end{align*}
Here, $\bar\rho = \frac{\rho^--\rho^+}{4}$, and in particular, in the stable case $\bar\rho >0$. For simplicity, in the following we will assume $\bar\rho= 1$.

The linear contribution for this equation is only coming from the first term, namely, 
$$ \mathcal L(h) = -  \Lambda^{-1} (h)$$
The following contributions are of cubic order and they come from all the four terms. By performing a formal Taylor expansion of the nonlocal equation, we find the following cubic terms:
$$ \mathcal{C}_1(h) = \frac{1}{2} h_\alpha(\alpha) \mathcal{H} (h^2).$$

$$  \mathcal{C}_2(h) = \frac{1}{4} \frac{1}{2\pi}\int_\TT \frac{ \left(h(\alpha)-h(\beta) \right)^2}{\sin^2\left(\frac{ \alpha-\beta}{2} \right)} h(\beta) d \beta .$$

$$ \mathcal{C}_3(h)= - \frac{1}{2} \frac{1}{2\pi} \int_\TT \frac{(h(\alpha)-h(\beta))^2}{\sin^2\left(\frac{ \alpha-\beta}{2} \right)}h(\beta) d\beta  .$$
We note that
\begin{align*}
	  \mathcal{C}_2(h) +  \mathcal{C}_3(h) &= \frac{1}{4} \frac{1}{2\pi}\int_\TT \frac{(h(\alpha)-h(\beta))^3}{\sin^2\left(\frac{ \alpha-\beta}{2} \right)} d\beta   -h(\alpha)  \frac{1}{4} \frac{1}{2\pi} \int_\TT \frac{2h(\alpha)(h(\alpha)-h(\beta))-(h(\alpha)^2-h(\beta)^2)}{\sin^2\left(\frac{ \alpha-\beta}{2} \right)} d\beta \\
	 & = \frac{1}{4} \frac{1}{2\pi} \int_\TT \frac{(h(\alpha)-h(\beta))^3}{\sin^2\left(\frac{ \alpha-\beta}{2} \right)} d\beta  + h^2 \Lambda(h) - \frac{1}{2} h \Lambda(h^2).
\end{align*}
We also find the cubic term
\begin{align*}
	 \mathcal{C}_4(h)& = \frac{1}{2} \frac{1}{2\pi} \int_\TT \frac{h(\alpha)-h(\beta)}{\sin^2\left(\frac{ \alpha-\beta}{2} \right)}h(\beta) (h_\alpha (\alpha) + h_\alpha (\beta) ) \sin (\alpha-\beta)  d\beta \\
	& =  \frac{1}{2\pi} \int_\TT  \cot\left(\frac{ \alpha-\beta}{2} \right) h(\beta) (h(\alpha)-h(\beta)) (h_\alpha (\alpha) + h_\alpha (\beta) )   d\beta \\
	& =   \frac{1}{2} (h^2)_\alpha \HH(h) -  h_\alpha \HH(h^2) + \frac{1}{2} h \Lambda (h^2) - \frac{1}{3} \Lambda (h^3).
\end{align*}
All together, we have that 
$$  \mathcal{C}(h) = -\frac{1}{2} h_\alpha \HH(h^2)  + h^2 \Lambda(h)+ \frac{1}{2} (h^2)_\alpha \HH(h)  - \frac{1}{3} \Lambda (h^3) +  \frac{1}{4} \frac{1}{2\pi} \int_\TT \frac{(h(\alpha)-h(\beta))^3}{\sin^2\left(\frac{ \alpha-\beta}{2} \right)} d\beta.$$
We will select the linear term and the fist cubic term in $ \mathcal{C}(h)$ as our model. Motivation for this type of model can be seen at \cite{AC2021,CCF2005,CCF2006}.

For these three one-dimensional models, we prove global in time well-posedness for small initial data and the formation of finite time singularities under certain conditions on the initial data. 

\subsection{Global well-posedness }

\begin{theo}[Existence of solutions for small data]
	Let $u_0 \in \dot H^{4}(\TT)$ be a zero mean initial data for the Cauchy problem \eqref{quadratic}. Then, if $\norm{u_0}_{  \dot H^{4}(\TT)}$ is small enough, there exist a solution 
	$$
	u\in C([0,T],H^4), \forall \, T >0.
	$$
	Furthermore, the solution verifies
	$$
	\norm{u(t)}_{L^2}\leq C(1+t)^{-2.25}.
	$$
\end{theo}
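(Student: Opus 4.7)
The equation is a Burgers-type transport perturbed by the smoothing, dissipative operator $-\Lambda^{-1}$. My strategy is: local well-posedness by standard means, a priori estimates at the $L^2$ and $H^4$ levels, a smallness bootstrap that yields global existence, and finally a Gagliardo--Nirenberg interpolation that produces the polynomial decay.

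Local well-posedness in $H^4$ follows from a Galerkin or vanishing-viscosity scheme, since $H^4(\TT)\hookrightarrow C^3(\TT)$ and $\Lambda^{-1}$ is a smoothing operator of order $-1$. Testing the equation against $u$ annihilates the transport term via $\int u u_\alpha\cdot u\, d\alpha=\tfrac13\int(u^3)_\alpha\, d\alpha=0$, while $\int \Lambda^{-1}(u)\cdot u\, d\alpha = \norm{u}_{\dot H^{-1/2}}^2$, giving
\begin{equation*}
\tfrac{d}{dt}\norm{u}_{L^2}^2+2\norm{u}_{\dot H^{-1/2}}^2=0,
\end{equation*}
so the zero-mean condition is preserved and $\norm{u(t)}_{L^2}$ is monotone non-increasing. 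Applying $\pa_\alpha^4$ and testing with $\pa_\alpha^4 u$, the top Leibniz term becomes, after integration by parts, $\int u\,\pa_\alpha^5 u\,\pa_\alpha^4 u = -\tfrac12\int u_\alpha(\pa_\alpha^4 u)^2$, which is estimated by the 1D Sobolev embedding $\norm{u_\alpha}_{L^\infty}\lesssim \norm{u}_{H^2}\lesssim \norm{u}_{H^4}$; the remaining Leibniz pieces are handled in the same way, producing
\begin{equation*}
\tfrac{d}{dt}\norm{u}_{H^4}^2+2\norm{u}_{\dot H^{7/2}}^2\le C\norm{u}_{H^4}^3.
\end{equation*}

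For global existence I would run a coupled bootstrap on $\norm{u(t)}_{H^4}$ and $(1+t)^{9/4}\norm{u(t)}_{L^2}$: assuming both quantities are bounded by $2\delta$, with $\delta=\norm{u_0}_{H^4}$ small enough, on a maximal interval, I rewrite the cubic term via a fractional Gagliardo--Nirenberg decomposition $\norm{u_\alpha}_{L^\infty}\lesssim \norm{u}_{L^2}^{a}\norm{u}_{\dot H^{7/2}}^{1-a}$ for a small $a>0$, use Young's inequality to absorb one factor of $\norm{u}_{\dot H^{7/2}}$ into the dissipation, and feed the polynomial $L^2$ decay ansatz into the remainder so that Grönwall lets $\norm{u}_{H^4}$ grow by at most a power of $\delta$. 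Once $\norm{u}_{H^4}$ is uniformly controlled, the $L^2$ identity combined with the Gagliardo--Nirenberg inequality
\begin{equation*}
\norm{u}_{L^2}^{(2k+1)/k} \le C\,\norm{u}_{\dot H^{-1/2}}^2\,\norm{u}_{\dot H^k}^{1/k},
\end{equation*}
applied at $k=9/4$ (using $\norm{u}_{\dot H^{9/4}}\le \norm{u}_{H^4}$), gives the autonomous ODE $\dot y\le -c\,y^{1+1/(2k)}$ for $y=\norm{u}_{L^2}^2$; integration yields $y(t)\le C(1+t)^{-2k}$, hence $\norm{u(t)}_{L^2}\le C(1+t)^{-9/4}$.

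The main obstacle is the bootstrap closure in the penultimate step: the dissipation $\norm{u}_{\dot H^{7/2}}^2$ sits strictly half a derivative below the $H^4$ energy, so it cannot absorb the cubic term on its own. One is forced to feed the polynomial $L^2$ decay back into the $H^4$ estimate through a fractional Gagliardo--Nirenberg decomposition, coupling the two bounds, and the delicate arithmetic step is to check that the decay exponent $9/4$ is self-consistent, i.e.\ that the same rate appearing in the decay ODE is compatible with what is needed to close the $H^4$ bootstrap.
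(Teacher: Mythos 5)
Your proposal reaches the same conclusion via a genuinely different route for the $L^2$ decay. The paper writes the solution with Duhamel's formula and invokes a linear semigroup decay estimate of the form $\norm{e^{-\Lambda^{-1}t}f}_{L^2}\lesssim(1+t)^{-2.25}\norm{f}_{\dot H^{2.5}}$ (established in their earlier work \cite{GGS23}), then uses a Moser-type product estimate to reduce $\norm{u\,u_\alpha}_{\dot H^{2.5}}\lesssim\norm{u}_{L^2}\norm{u}_{\dot H^{4}}$ and a time-convolution lemma (Lemma 2.4 of \cite{elgindi_asymptotic_2017}) to close the Duhamel integral; the whole argument is phrased as a single quadratic-in-$\tnorm{\cdot}$ closure for $\tnorm{u}=\sup_t\left((1+t)^{2.25}\norm{u}_{L^2}+\norm{u}_{\dot H^4}\right)$. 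You instead bypass the semigroup entirely: the $L^2$ energy identity $\tfrac{d}{dt}\norm{u}_{L^2}^2=-2\norm{u}_{\dot H^{-1/2}}^2$ plus the three-point interpolation $\norm{u}_{L^2}^{(2k+1)/k}\lesssim\norm{u}_{\dot H^{-1/2}}^2\norm{u}_{\dot H^k}^{1/k}$ gives a Riccati inequality $\dot y\leq-c\,y^{1+1/(2k)}$ whose integration produces the algebraic decay with constants uniform in the smallness parameter. This Nash-type route is self-contained and, in principle, gives a better exponent (any $k\leq4$, not just $9/4$; the stated $2.25$ is not sharp in either argument). At the $\dot H^4$ level your proposal and the paper coincide: the dissipation $\norm{u}_{\dot H^{7/2}}^2$ plays no real role — the paper simply drops it — and the closure comes from interpolating $\norm{u_\alpha}_{L^\infty}\lesssim\norm{u}_{L^2}^{5/8}\norm{u}_{\dot H^4}^{3/8}$ against the $L^2$ decay, so that the time integral $\int_0^\infty(1+s)^{-2.25\cdot 5/8}\,ds$ converges (requiring decay exponent $\kappa>8/5$, which $9/4$ satisfies). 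Your description of absorbing a power of $\norm{u}_{\dot H^{7/2}}$ via Young is therefore not needed and somewhat misleads the reader, since the $\norm{u}_{\dot H^4}^2$ factor in the leading cubic term cannot be absorbed by the weaker dissipation in any case, but the arithmetic you flag as the delicate step does close with $k=9/4$. Overall the proposal is correct; it is a more elementary and self-contained alternative to the paper's semigroup/Duhamel approach.
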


\begin{proof} 
We will omit the time dependence of $u$ when it is clear from the context. The proof is based on finding a polynomial estimate for the quantity
	\begin{equation}
		\tnorm{u} = \sup_{t \in [0,T]} \left((1+t)^{2.25} \norm{u}_{L^2}+ \norm{u}_{{ \dot H}^4}\right). 	
	\end{equation}
		Using the Duhamel formula, we can write the solution as
		
		\begin{equation}
			u(x,t) = e^{- \Lambda^{-1} t} u_0(x) + \int_{0}^{t} e^{- \Lambda^{-1} (t-s)} u(x,s) u_x(x,s) ds.
		\end{equation}
		Taking the $L^2$ norm, we get
		\begin{align*}
			\norm{u(t)}_{L^2} & \leq  \norm{e^{- \Lambda^{-1} t} u_0}_{L^2} + \int_{0}^{t} \norm{e^{- \Lambda^{-1} (t-s)} u(s) u_x(s)}_{L^2} ds ,
			\\
			& \leq C \norm{u_0}_{ \dot H^{ 2.5}}   (1+t)^{-2.25} + C \int_{0}^{t} (1+t-s)^{-2.25} \norm{u(s) u_x(s)}_{ \dot H^
				{2.5}} ds ,
			\\
			&  \lesssim \norm{u_0}_{ \dot H^{  2.5}} (1+t)^{-2.25} + \int_{0}^{t} (1+t-s)^{-2.25} \norm{ u^2(s)}_{ \dot H^{  3.5}} ds ,
			\\
			&  \lesssim \norm{u_0}_{ \dot H^{  2.5}} (1+t)^{-2.25} + \int_{0}^{t} (1+t-s)^{-2.25} \norm{u(s)}_{L^\infty} \norm{ u(s)}_{ \dot H^{  3.5}} ds. 
\end{align*}
Using interpolation, we find that
		\begin{align*}
			\norm{u(t)}_{L^2} &  \lesssim \norm{u_0}_{ \dot H^{  2.5}} (1+t)^{-2.25} \\
			& \quad + \int_{0}^{t} (1+t-s)^{-2.25} \norm{u(s)}_{L^2}^{7/8} \norm{u(s)}_{  \dot H^4}^{1/8}\norm{u(s)}_{L^2}^{1/8} \norm{u(s)}_{ \dot  H^4}^{7/8} ds ,
			\\
			&  \lesssim \norm{u_0}_{ \dot H^{  2.5}} (1+t)^{-2.25} \\
			& \quad  + \int_{0}^{t} (1+t-s)^{-2.25} (1+s)^{-2.25}  (1+s)^{2.25}  \norm{u(s)}_{L^2} \norm{u(s)}_{  \dot H^4} ds ,
			\\
			&  \lesssim \norm{u_0}_{ \dot H^{ 2.5}} (1+t)^{-2.25} + \tnorm{u}^2 \int_{0}^{t} (1+t-s)^{-2.25} (1+s)^{-2.25}  ds ,
			\\
			&  \lesssim \norm{u_0}_{ \dot H^{ 2.5}} (1+t)^{-2.25} + \tnorm{u}^2 (1+t)^{-2.25} ,
			\\
		\end{align*}
		where we have used the decay of the linear semigroup established in \cite{GGS23}, Moser inequality , Gagliardo-Nirenberg interpolation inequality and Lemma 2.4 in \cite{elgindi_asymptotic_2017}.
		Then, 
		\begin{equation} \label{l2bound}
			(1+t)^{2.25}	\norm{u(t)}_{L^2}  \lesssim  \norm{u_0}_{ \dot H^{2.5}}+ \tnorm{u}^2  .
		\end{equation}
		
We have to bound $\norm{u}_{ \dot H^4}$ using energy estimates. 
		
		\begin{align*}
			\frac{1}{2} \frac{d}{dt} \int |\partial_x^4 u|^2 dx & = \int  \partial_x^4 u \partial_x^4 \left( -  u  u_x - \Lambda^{-1} u \right) dx ,
			\\
			& = - \int  |\Lambda^{1/2} \partial_x^3 u|^2 -  \int  \partial_x^4 u \partial_x^4 (u u_x) dx ,
			\\
			& \leq -5.5 \int (\partial_x^4 u)^2 u_x dx - 10 \int \partial_x^4 u \partial_x^3 u \partial_x^2 u dx,
			\\
			& =:  -5.5 I_1 -10  I_2.
		\end{align*}
We can bound the first contribution as follows		
		\begin{align*}
			|I_1| & \leq \norm{u_x}_{L^\infty} \norm{u}_{ \dot H^4}^2, \\
			& \lesssim  \norm{u}_{ \dot H^4}^{3/8} \norm{u}_{L^2}^{5/8}  \norm{u}_{ \dot H^4}^2 ,
			\\
			& \lesssim (1+t)^{-2.25 \frac{5}{8}} \tnorm{u}^3.
		\end{align*}
The second term can be handled similarly		
		\begin{align*}
			I_2 & = \int \partial_x^4 u \partial_x^3 u \partial_x^2 u dx ,
			\\
			& = - \int  \partial_x^3 u (  \partial_x^4 u \partial_x^2 u +   \partial_x^3 u \partial_x^3 u) dx ,
			\\
			& = - I_2 - \int ( \partial_x^3 u)^3 dx,
		\end{align*}
which lead to		
		\begin{align*}
			|I_2| & \leq  \int | \partial_x^3 u|^3 dx,
			\\
			& \leq \norm{\partial_x^3 u}_{L^\infty } \norm{\partial_x^3 u}_{L^2 }^2,
			\\
			& \lesssim \norm{u}_{ \dot H^4 }^{7/8} \norm{ u}_{L^2 }^{1/8} \norm{u}_{ \dot H^4 }^{2 \cdot 3/4} \norm{ u}_{L^2 }^{2/4},
			\\
			& \lesssim (1+t)^{-2.25 \frac{5}{8}} \tnorm{u}^3.
		\end{align*}
Collecting all the estimates
		\begin{align*}
			\frac{d}{dt} \norm{u(t)}_{\dot H^4}^2 & \lesssim (1+t)^{-2.25 \frac{5}{8}} \tnorm{u}^3 ,
			\\
			\norm{u(t)}_{\dot H^4}^2 & \lesssim  \norm{u_0}_{\dot H^4}^2 + \tnorm{u}^3 \int_0^t  (1+s)^{-2.25 \frac{5}{8}} ,
			\\
			& \lesssim  \norm{u_0}_{\dot H^4}^2 + \tnorm{u}^3 \int_0^\infty  (1+s)^{-2.25 \frac{5}{8}} ,
			\\
			& \lesssim  \norm{u_0}_{\dot H^4}^2 + \tnorm{u}^3.
		\end{align*}
		Thus, 
		\begin{align}
			\norm{u(t)}_{\dot H^4} & \lesssim  \norm{u_0}_{\dot H^4} + \tnorm{u}^{\frac{3}{2}} \label{h4bound}.
		\end{align}

As a conclusion, by \eqref{l2bound} and \eqref{h4bound}, we find
	\begin{equation}
		\tnorm{u} \lesssim  2\norm{u_0}_{\dot H^{4}}+ \tnorm{u}^2 +  \tnorm{u}^{\frac{3}{2}}.
	\end{equation}
	If the initial data $ \norm{u_0}_{\dot H^{4}}$ is small enough, we get that $	\tnorm{u}$ is bounded independently of the choice of $T$. As a consequence, global existence for small initial data is proved.
\end{proof}

\begin{theo}[Existence of solutions for small data]
Let $u_0 \in \dot H^{3}(\TT)$ be a zero mean initial data for the Cauchy problem \eqref{cubiclocal}Then, if $\norm{u_0}_{  \dot H^{3}(\TT)}$ is small enough, there exist a solution 
	$$
	u\in C([0,T],H^3), \forall  \, T >0.
	$$
	Furthermore, the solution verifies
	$$
	\norm{u(t)}_{L^2}\leq C(1+t)^{-1.25}.
	$$
\end{theo}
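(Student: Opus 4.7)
The plan is to mirror the Duhamel/energy argument just given for the quadratic model \eqref{quadratic}, retuning the time weight and the Sobolev index to the higher-degree nonlinearity $u^2 u_\alpha$. Concretely, I would work with
$$\tnorm{u} = \sup_{t \in [0,T]}\left((1+t)^{1.25}\norm{u(t)}_{L^2} + \norm{u(t)}_{\dot H^3}\right)$$
and aim for a closed bootstrap inequality $\tnorm{u} \lesssim \norm{u_0}_{\dot H^3} + \tnorm{u}^2 + \tnorm{u}^3$; smallness of $\norm{u_0}_{\dot H^3}$ together with a standard continuity argument then gives a uniform bound on $\tnorm{u}$, yielding global existence in $H^3$ and the claimed decay rate.

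For the $L^2$ component I would apply Duhamel,
$$u(t) = e^{-\Lambda^{-1}t}\, u_0 - \frac{1}{3}\int_0^t e^{-\Lambda^{-1}(t-s)}\,\partial_\alpha(u^3)(s)\,ds,$$
and use the linear semigroup decay from \cite{GGS23} to extract the factor $(1+t-s)^{-1.25}$, thereby also absorbing one derivative of the nonlinearity through the kernel. Moser inequality together with Gagliardo-Nirenberg interpolation between $L^2$ and $\dot H^3$ bound the integrand by a product of the form $\norm{u(s)}_{L^2}^a \norm{u(s)}_{\dot H^3}^{3-a}$ with $a>1$, which in turn is controlled by $(1+s)^{-\kappa}\tnorm{u}^3$ for some $\kappa>1.25$. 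The convolution Lemma 2.4 of \cite{elgindi_asymptotic_2017} then collapses the time integral into $(1+t)^{-1.25}\tnorm{u}^3$, exactly matching the weight in the triple norm.

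For the $\dot H^3$ component I would carry out the standard energy estimate on $\partial_\alpha^3 u$. The dissipative contribution is nonpositive and can be discarded, and the task reduces to bounding $\int \partial_\alpha^3 u\,\partial_\alpha^3(u^2 u_\alpha)\,d\alpha$. The delicate point, and the step I expect to be the main obstacle, is the top-order piece $\int u^2 (\partial_\alpha^3 u)(\partial_\alpha^4 u)\,d\alpha$, which exhibits an apparent derivative loss: one must exploit the divergence form $u^2 u_\alpha = \tfrac{1}{3}(u^3)_\alpha$ and integrate by parts once to rewrite it as $-\int u u_\alpha (\partial_\alpha^3 u)^2\,d\alpha$. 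Once this cancellation is explicit, the remaining Leibniz pieces are quartic in $u$ with at most three derivatives on any factor, and are controlled by $\norm{u}_{L^\infty}\norm{u_\alpha}_{L^\infty}\norm{u}_{\dot H^3}^2$ and similar combinations; Gagliardo-Nirenberg brings a positive power of $\norm{u}_{L^2}$ in each term, hence a time-decaying weight whose exponent exceeds $1$. Integrating in time yields $\norm{u(t)}_{\dot H^3}^2 \lesssim \norm{u_0}_{\dot H^3}^2 + \tnorm{u}^4$, which combined with the Duhamel bound closes the bootstrap exactly as in the quadratic case.
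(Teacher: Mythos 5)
Your proposal mirrors the paper's proof essentially step for step: the same triple norm with weight $(1+t)^{1.25}$ on $\norm{u}_{L^2}$, the same Duhamel argument invoking the $\dot H^{1.5}\to L^2$ semigroup decay together with Moser and Gagliardo--Nirenberg interpolation to close the low-regularity bound, and the same $\dot H^3$ energy estimate in which the top-order term is resolved by a single integration by parts to produce $-\int u\,u_\alpha(\partial_\alpha^3 u)^2\,d\alpha$, giving the bootstrap $\tnorm{u}\lesssim\norm{u_0}_{\dot H^3}+\tnorm{u}^2+\tnorm{u}^3$. The only cosmetic difference is that you write the nonlinearity in divergence form $\tfrac13\partial_\alpha(u^3)$ in Duhamel whereas the paper keeps $u^2u_\alpha$; the estimates are the same.
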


\begin{proof} 
The proof is based on a polynomial estimate on the energy
	\begin{equation}
		\tnorm{u} = \sup_{t \in [0,T]} \left((1+t)^{1.25} \norm{u}_{L^2}+ \norm{u}_{{ \dot H}^3}\right). 	
	\end{equation}	
	
		Using the Duhamel formula, we can write the solution as
		
		\begin{equation}
			u(x,t) = e^{- \Lambda^{-1} t} u_0(x) + \int_{0}^{t} e^{- \Lambda^{-1} (t-s)} u^2(x,s) u_x(x,s) ds.
		\end{equation}
		Taking the $L^2$ norm, we get that
		\begin{align*}
			\norm{u(t)}_{L^2} & \leq  \norm{e^{- \Lambda^{-1} t} u_0}_{L^2} + \int_{0}^{t} \norm{e^{- \Lambda^{-1} (t-s)} u^2(s) u_x(s)}_{L^2} ds ,
			\\
			& \leq C \norm{u_0}_{ \dot H^{ 1.5}}   (1+t)^{-1.25} + C \int_{0}^{t} (1+t-s)^{-1.25} \norm{u^2(s) u_x(s)}_{ \dot H^
				{1.5}} ds ,
			\\
			&  \lesssim \norm{u_0}_{ \dot H^{  1.5}} (1+t)^{-1.25} \\
			& \quad + \int_{0}^{t} (1+t-s)^{-1.25} \left( \norm{u(s)}_{L^\infty}^2 \norm{ u(s)}_{ \dot H^{  2.5}} + \norm{u^2(s)}_{\dot H^{1.5}} \norm{u_x(s)}_{L^\infty} \right) ds .
\end{align*}
Using interpolation, we find that
		\begin{align*}
			\norm{u(t)}_{L^2} &  \lesssim \norm{u_0}_{ \dot H^{  1.5}} (1+t)^{-1.25} \\
			& \quad  + \int_{0}^{t} (1+t-s)^{-1.25} \left( \norm{u(s)}_{L^2}^{1/6 +5/3}  \norm{u(s)}_{\dot H^{3}}^{5/6 +1/3} + \norm{u(s)}_{L^2}^{1/2 +5/6}  \norm{u(s)}_{\dot H^{3}}^{1/2+1/6+1}   \right) ds ,
			\\
			&  \lesssim \norm{u_0}_{ \dot H^{  1.5}} (1+t)^{-1.25} \\
			& \quad  + \int_{0}^{t} (1+t-s)^{-1.25}  \left( (1+s)^{-1.25 \cdot 11/6}+  (1+s)^{-1.25 \cdot 4/3} \right) \tnorm{u}^3 ds ,
			\\
			&  \lesssim \norm{u_0}_{ \dot H^{ 1.5}} (1+t)^{-1.25} + \tnorm{u}^3 (1+t)^{-1.25} ,
			\\
		\end{align*}
		Then, 
		\begin{equation} \label{l2bound2b}
			(1+t)^{1.25}	\norm{u(t)}_{L^2}  \lesssim  \norm{u_0}_{ \dot H^{1.5}}+ \tnorm{u}^{3}  .
		\end{equation}
		
Now we estimate		
		\begin{align*}
			\frac{1}{2} \frac{d}{dt} \int |\partial_x^3 u|^2 dx & = \int  \partial_x^3 u \partial_x^3 \left( -  u^2  u_x - \Lambda^{-1} u \right) dx ,
			\\
			& = - \int  |\Lambda^{1/2} \partial_x^2 u|^2 -  \int  \partial_x^3 u \partial_x^3 (u^2 u_x) dx ,
			\\
			& \leq -   \int  \partial_x^3 u \partial_x^3 (u^2 u_x) dx ,
			\\
			& = C_1  \int  (\partial_x^3 u)^2 u_x u dx  + C_2  \int  (\partial_x^2 u)^2 \partial_x^3 u u dx  + C_3 \int ( u_x)^2 \partial_x^3 u \partial_x^2u dx ,
			\\
			& =:  C_1 I_1 + C_2  I_2 + C_3 I_3.
		\end{align*}
We now compute that		
		\begin{align*}
			|I_1| & \lesssim \norm{u}_{L^\infty }\norm{u_x}_{L^\infty } \norm{u}_{\dot H^3 }^2, \\
			& \lesssim \norm{u}_{L^2 }^{5/6+1/2} \norm{u}_{\dot H^3 }^{2+1/6+1/2} ,\\
			& \lesssim (1+t)^{-1.25 \cdot 4/3} \tnorm{u}^4.
		\end{align*}
The second integral can be estimated as		
		\begin{align*}
			|I_2| & \leq  \norm{u}_{L^\infty }\norm{\partial_x^3 u}_{L^2 } \norm{(\partial_x^2 u)^2}_{L^2 },\\
			& \lesssim \norm{u}_{L^\infty }\norm{u}_{\dot H^3 } \norm{\partial_x^2 u}_{L^4 }^2,\\
			& \lesssim \norm{u}_{L^2 }^{5/6+1/6}\norm{u}_{\dot H^3 }^{1+1/6+11/6}, \\
			& \lesssim (1+t)^{-1.25} \tnorm{u}^4.
		\end{align*}
Finally, the third integral is bounded as follows		
		\begin{align*}
			|I_3| & \leq  \norm{u_x}_{L^\infty }^2\norm{\partial_x^3 u}_{L^2 } \norm{\partial_x^2 u}_{L^2 },\\
			& \lesssim \norm{u}_{L^2 }^{1+1/3}\norm{u}_{\dot H^3 }^{2+2/3} ,\\
			& \lesssim (1+t)^{-1.25 \cdot 4/3} \tnorm{u}^4.
		\end{align*}
Putting every estimate together we find that		
		\begin{align*}
			\frac{d}{dt} \norm{u(t)}_{\dot H^3}^2 & \lesssim (1+t)^{-1.25} \tnorm{u}^4,
			\\
			& \lesssim  \norm{u_0}_{\dot H^3}^2 + \tnorm{u}^4 \int_0^\infty  (1+s)^{-1.25}.
		\end{align*}
		Thus, 
		\begin{align}
			\norm{u(t)}_{\dot H^3} & \lesssim  \norm{u_0}_{\dot H^3} + \tnorm{u}^{2}. \label{h3bound}
		\end{align}
		
Finally, by \eqref{l2bound2b} and \eqref{h3bound}, 
	\begin{equation}
		\tnorm{u} \lesssim  \norm{u_0}_{\dot H^{3}}+ \tnorm{u}^2 +  \tnorm{u}^{3}.
	\end{equation}
	If the initial data $ \norm{u_0}_{\dot H^{3}}$ is small enough, we get that $	\tnorm{u}$ is bounded independently of the choice of $T$. As a consequence, global existence for small initial data is proved.
\end{proof}

\begin{theo}[Existence of solutions for small data]
Let $u_0 \in H^{3}(\TT)$ be the initial data for the Cauchy problem \eqref{cubicnonlocal2}. Then, if $\norm{u_0}_{H^{3}(\TT)}$ is small enough, there exist a solution 
	$$
	u\in C([0,T],H^3),\; \forall \, T >0.
	$$
	Furthermore, the solution verifies
	$$
	\norm{u(t)}_{L^2}\leq C(1+t)^{-1.25}.
	$$
\end{theo}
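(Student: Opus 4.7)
The plan is to mirror the strategy used in the two preceding theorems, adapted to the cubic nonlocal nonlinearity $\frac{1}{2}\mathcal{H}(u^2)u_\alpha$. Specifically, I would introduce the weighted energy
\begin{equation*}
	\tnorm{u} = \sup_{t\in[0,T]} \left( (1+t)^{1.25}\,\norm{u(t)}_{L^2} + \norm{u(t)}_{\dot H^3} \right),
\end{equation*}
and try to close a polynomial inequality of the form $\tnorm{u}\lesssim \norm{u_0}_{H^3} + \tnorm{u}^2 + \tnorm{u}^3$, so that a standard bootstrap for small initial data yields the conclusion, decay rate included.

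For the $L^2$ decay I would write the Duhamel formula
\begin{equation*}
	u(\alpha,t) = e^{-\Lambda^{-1}t}u_0(\alpha) - \frac{1}{2}\int_0^t e^{-\Lambda^{-1}(t-s)} \HH(u^2(s))\,u_\alpha(s)\,ds,
\end{equation*}
and apply the linear decay $\norm{e^{-\Lambda^{-1}t}f}_{L^2}\lesssim (1+t)^{-1.25}\norm{f}_{\dot H^{1.5}}$ from \cite{GGS23}, exactly as for \eqref{cubiclocal}. Since $\HH$ is bounded on every $\dot H^s$, the nonlinear source is estimated via the fractional product rule, giving terms of the form $\norm{u}_{L^\infty}^2\norm{u}_{\dot H^{2.5}}$ and $\norm{u^2}_{\dot H^{1.5}}\norm{u_\alpha}_{L^\infty}$, which are then Gagliardo--Nirenberg-interpolated between $L^2$ and $\dot H^3$. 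As for the previous cubic model, the time integral $\int_0^t (1+t-s)^{-1.25}(1+s)^{-a}\,ds$ with $a>1$ produces the desired $(1+t)^{-1.25}\tnorm{u}^3$ bound.

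For the $\dot H^3$ estimate I would compute
\begin{equation*}
	\frac{1}{2}\frac{d}{dt}\norm{u}_{\dot H^3}^2 = -\norm{\Lambda^{1/2}\partial_\alpha^2 u}_{L^2}^2 - \frac{1}{2}\int \partial_\alpha^3 u\,\partial_\alpha^3\bigl(\HH(u^2)u_\alpha\bigr)\,d\alpha,
\end{equation*}
discard the negative dissipation, and expand the nonlinear commutator by Leibniz. The only potentially dangerous term is $\HH(u^2)\partial_\alpha^4 u$; integrating by parts against $\partial_\alpha^3 u$ and using $\partial_\alpha\HH = \Lambda$ replaces it by $-\tfrac{1}{2}\int\Lambda(u^2)(\partial_\alpha^3 u)^2\,d\alpha$, which combines cleanly with the next-to-top term $3\Lambda(u^2)\partial_\alpha^3 u$. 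The remaining pieces $3\,\partial_\alpha\Lambda(u^2)\partial_\alpha^2 u$ and $\partial_\alpha^2\Lambda(u^2)\partial_\alpha u$ are estimated in $L^\infty$ via $\norm{\Lambda(u^2)}_{L^\infty}\lesssim \norm{u}_{L^\infty}\norm{u}_{C^1}$ and analogous bounds. Every factor is then interpolated between $\norm{u}_{L^2}$ and $\norm{u}_{\dot H^3}$ so that the bound $\tfrac{d}{dt}\norm{u}_{\dot H^3}^2 \lesssim (1+t)^{-\kappa}\tnorm{u}^4$ with $\kappa>1$ emerges, giving $\norm{u}_{\dot H^3}\lesssim \norm{u_0}_{\dot H^3}+\tnorm{u}^2$ after integration in time.

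The main obstacle is the energy estimate in $\dot H^3$: unlike the quasilinear model \eqref{cubiclocal}, the top-order transport coefficient is the nonlocal $\HH(u^2)$, so one must exploit the identity $\partial_\alpha\HH=\Lambda$ carefully to avoid a loss of derivatives, and track the sign/coefficient obtained after combining $\HH(u^2)\partial_\alpha^4 u$ with $\Lambda(u^2)\partial_\alpha^3 u$. Once this top-order cancellation is verified, the remaining subcritical pieces can be balanced by Gagliardo--Nirenberg interpolations tuned so that every time weight gives a locally integrable power, exactly as in the proof for \eqref{cubiclocal}. With the two bounds in hand, the closing inequality $\tnorm{u}\lesssim \norm{u_0}_{H^3}+\tnorm{u}^2+\tnorm{u}^3$ yields global existence and the stated $L^2$ decay for small initial data.
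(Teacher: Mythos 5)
Your proposal is correct and mirrors the paper's proof essentially step by step: the same weighted energy $\tnorm{u}$, the same Duhamel plus linear-semigroup decay for the $L^2$ piece, and the same $\dot H^3$ energy estimate in which the top-order term $\HH(u^2)\partial_\alpha^4 u$ is integrated by parts via $\partial_\alpha\HH=\Lambda$ and then absorbed into a term of the form $\int\Lambda(u^2)(\partial_\alpha^3 u)^2\,d\alpha$, with the remaining subcritical pieces handled by Gagliardo--Nirenberg interpolation. The paper's computation produces exactly the three integrals $I_1, I_2, I_3$ you describe, so the strategies coincide.
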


\begin{proof} 
We now estimate the previous energy
		\begin{equation}
		\tnorm{u} = \sup_{t \in [0,T]} \left((1+t)^{1.25} \norm{u}_{L^2}+ \norm{u}_{{ \dot H}^3}\right). 	
	\end{equation}
We start with the low regularity norm. Using the Duhamel formula, we can write the solution as
		
		\begin{equation}
			u(x,t) = e^{- \Lambda^{-1} t} u_0(x) - \frac{1}{2}  \int_{0}^{t} e^{- \Lambda^{-1} (t-s)} \mathcal{H} u^2(x,s) u_x(x,s) ds.
		\end{equation}
		Taking the $L^2$ norm, we get
		\begin{align*}
			\norm{u(t)}_{L^2} & \leq  \norm{e^{- \Lambda^{-1} t} u_0}_{L^2} + \frac{1}{2} \int_{0}^{t} \norm{e^{- \Lambda^{-1} (t-s)} \mathcal{H}u^2(s) u_x(s)}_{L^2} ds ,
			\\
			& \leq C \norm{u_0}_{ \dot H^{ 1.5}}   (1+t)^{-1.25} + C \int_{0}^{t} (1+t-s)^{-1.25} \norm{ \mathcal{H}u^2(s) u_x(s)}_{ \dot H^
				{1.5}} ds ,
			\\
			&  \lesssim \norm{u_0}_{ \dot H^{ 1.5}} (1+t)^{-1.25} + \tnorm{u}^3 (1+t)^{-1.25} ,
			\\
		\end{align*}
		Then, 
		\begin{equation} \label{l2bound2c}
			(1+t)^{2.25}	\norm{u(t)}_{L^2}  \lesssim  \norm{u_0}_{ \dot H^{1.5}}+ \tnorm{u}^{3}  .
		\end{equation}
		
Now we continue with the high regularity term:		
		\begin{align*}
			\frac{1}{2} \frac{d}{dt} \int |\partial_x^3 u|^2 dx & = \int  \partial_x^3 u \partial_x^3 \left( -   \mathcal{H}u^2  u_x - \Lambda^{-1} u \right) dx ,
			\\
			& = - \int  |\Lambda^{1/2} \partial_x^2 u|^2 -  \int  \partial_x^3 u \partial_x^3 ( \mathcal{H}u^2 u_x) dx ,
			\\
			& \leq C_1  \int  \partial_x^3 u\partial_x^2 \Lambda u^2 u_x dx  + C_2  \int \partial_x (\partial_x^2 u)^2   \partial_x \Lambda u^2 dx + C_3 \int  \partial_x^3 u \Lambda u^2 \partial_x^3 u dx ,
			\\
			& =:  C_1 I_1 + C_2  I_2 + C_3 I_3 .
		\end{align*}
As before we can estimate		
		\begin{align*}
			|I_1| & \lesssim  \norm{u_x}_{L^\infty } \norm{u^2}_{\dot H^{3}} \norm{u}_{\dot H^{3}}, \\
			& \lesssim  \norm{u_x}_{L^\infty } \norm{u}_{L^\infty }\norm{u}_{\dot H^{3}}^2 ,\\
			& \lesssim  \norm{u}_{L^2 } \norm{u}_{\dot H^{3}}^3,\\
			& \lesssim  (1+t)^{-1.25} \tnorm{u}^4.
		\end{align*}
The second term can be bounded as		
		\begin{align*}
			|I_2| & \lesssim  \norm{ (\partial_x^2 u)^2  }_{\dot H^1} \norm{   u^2  }_{\dot H^2} , \\
			& \lesssim  \norm{ \partial_x^2 u  }_{L^\infty}  \norm{ u  }_{\dot H^3}   \norm{  u  }_{L^\infty}  \norm{ u  }_{\dot H^2} , \\
			& \lesssim \norm{u}_{L^2}^{1+1/3} \norm{u}_{\dot H^3}^{2+2/3},\\
			& \lesssim (1+t)^{-1.25 \cdot 4/3} \tnorm{u}^4.
		\end{align*}
Finally, the third term can be estimated as		
		\begin{align*}
			|I_3| & \lesssim   \norm{ u  }_{ \dot H^3}^2 \norm{  \Lambda u^2  }_{L^\infty}, \\
			& \lesssim   \norm{ u  }_{ \dot H^3}^2 \norm{  u^2  }_{W^{1,\infty}} ,\\
			& \lesssim \norm{ u  }_{ \dot H^3}^2 \norm{u^2}_{L^2}^{1/2} \norm{u^2}_{\dot H^3}^{1/2}, \\
			& \lesssim \norm{ u  }_{ \dot H^3}^{2+1/2} \norm{u}_{L^4} \norm{u}_{L^\infty}^{1/2}, \\
			& \lesssim \norm{ u  }_{ \dot H^3}^{2+1/2+1/12+1/12} \norm{u}_{L^2}^{11/12+5/12} ,\\
			& \lesssim (1+t)^{-1.25 \cdot 4/3} \tnorm{u}^4.
		\end{align*}
Collecting every estimate we conclude that			
		\begin{align*}
			\frac{d}{dt} \norm{u(t)}_{\dot H^3}^2 & \lesssim (1+t)^{-1.25} \tnorm{u}^4,
			\\
			& \lesssim  \norm{u_0}_{\dot H^3}^2 + \tnorm{u}^4 \int_0^\infty  (1+s)^{-1.25}.
		\end{align*}
		Then,
		\begin{align}
			\norm{u(t)}_{\dot H^3} & \lesssim  \norm{u_0}_{\dot H^3} + \tnorm{u}^{2}. 
		\end{align}

	Analogously as in the previous cubic model, these estimates prove global existence for small initial data now in the inhomogeneous $H^3$ due to the lack of hypothesis on the mean of the initial data.	
\end{proof}

\subsection{Finite-time singularities}

\begin{theo}[Blow-up for the quadratic 1D model]
	Let $u(x,t)$ be a smooth solution of the Cauchy problem \eqref{quadratic}. Then, there exists $C$ such that if the initial data $u_0(x)$ satisfies 
	$$ \min_{x} u_x(x,0) \leq -C,$$
	$u_x(x,t)$ blows up in finite time.
\end{theo}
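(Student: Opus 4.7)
The plan is to track $u_\alpha$ along a Lagrangian trajectory and compare the resulting ODE with the model Riccati equation $\dot y = -y^2 + K$, which blows up in finite time whenever $y(0) < -\sqrt{K}$; this strategy is in the spirit of \cite{CCF2005,CCF2006,DDL2009,AC2021}. First I would differentiate \eqref{quadratic} in $\alpha$ and use the identity $\paal \Lambda^{-1}(u) = -\HH(u)$ to obtain, with $v := u_\alpha$,
\[
v_t + u v_\alpha + v^2 = \HH(u).
\]
Picking $\alpha_0$ such that $v(\alpha_0, 0) = \min_\alpha u_\alpha(\alpha, 0) \leq -C$, letting $X(t)$ be the characteristic with $X(0) = \alpha_0$, and setting $m(t) := v(X(t), t)$, the transport equation above yields the scalar ODE
\[
\dot m(t) = -m(t)^2 + \HH(u)(X(t), t), \qquad m(0) \leq -C.
\]
Since $\min_\alpha u_\alpha(\alpha,t) \leq m(t)$, proving $m(t) \to -\infty$ in finite time proves the theorem.

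The crux is then to bound $|\HH(u)(X(t),t)|$ by a constant $K_0$ \emph{independent of} $C$ on the short time window where the Riccati dynamics will dominate. For this I would combine: (i) the dissipative energy estimate $\tfrac{1}{2}\tfrac{d}{dt}\norm{u}_{L^2}^2 = -\norm{u}_{\dot H^{-1/2}}^2 \leq 0$, so that $\norm{u(t)}_{L^2} \leq \norm{u_0}_{L^2}$; (ii) standard local well-posedness of \eqref{quadratic} in $H^s$ for $s > 3/2$, together with a Beale-Kato-Majda-type breakdown criterion asserting that the maximal time of existence $T^*$ satisfies $\int_0^{T^*}\norm{u_\alpha(t)}_{L^\infty}\, dt = \infty$ whenever $T^* < \infty$; and (iii) the Sobolev embedding $\norm{\HH(u)}_{L^\infty} \lesssim \norm{u}_{H^s}$. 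A short-time bootstrap on $[0,T]$ with $T$ of order $1/C$ does the job: the trivial differential inequality $\dot m \geq -m^2 - K$ keeps $|m(t)|$ of order $C$ on this window, so $\int_0^T \norm{u_\alpha}_{L^\infty}\, dt = O(1)$ uniformly in $C$; the standard commutator energy estimate $\tfrac{d}{dt}\norm{u}_{H^s} \lesssim \norm{u_\alpha}_{L^\infty}\norm{u}_{H^s}$ then implies $\norm{u(t)}_{H^s}$ grows by at most a multiplicative constant, providing $\norm{\HH(u)(t)}_{L^\infty} \leq K_0$ with $K_0$ depending only on $\norm{u_0}_{H^s}$.

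With $K_0$ fixed, I would choose $C \geq 2\sqrt{K_0}$. Then $m(t)^2 \geq K_0$ throughout the existence interval, so
\[
\dot m(t) \leq -m(t)^2 + K_0 \leq -\tfrac{3}{4} m(t)^2,
\]
and integrating this Riccati inequality yields $|m(t)| \geq C/(1 - \tfrac{3}{4}C t)$, so $m(t) \to -\infty$ at some time $t^* \leq 4/(3C)$. The BKM criterion then forces the maximal existence time to satisfy $T^* \leq t^*$, which is exactly the claimed finite-time blow-up of $u_\alpha$.

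The main obstacle is the coupling between the $L^\infty$ control of $\HH(u)$ (which rests on Sobolev regularity of $u$) and the BKM criterion (which requires that $u_\alpha$ does not itself blow up too quickly): both have to be bootstrapped together. The key algebraic fact that makes the scheme close is that the Riccati blow-up time scales like $1/C$ while the Sobolev norm of $u$ only degrades on the slower time scale set by $\int \norm{u_\alpha}_{L^\infty}\,dt = O(1)$; choosing $C$ sufficiently large relative to $\norm{u_0}_{H^s}$ separates these scales and closes the argument, which in turn fixes the quantitative threshold $C$ appearing in the statement.
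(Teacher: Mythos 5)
Your overall plan — deriving a Riccati-type differential inequality for $u_\alpha$ and forcing finite-time blow-up — is the right one and matches the paper's. However, there is a genuine gap in the way you control the nonlocal term, and the paper closes the argument by a completely different, structural observation that you miss.

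The gap is in the bootstrap. You track $m(t)=u_\alpha(X(t),t)$ along a \emph{fixed} characteristic and try to bound $\|\HH(u)\|_{L^\infty}$ by a constant $K_0$ depending only on $\|u_0\|_{H^s}$, via the commutator estimate and the claim that $\int_0^{T}\|u_\alpha\|_{L^\infty}\,dt=O(1)$ uniformly in $C$ on a window of length $T\sim 1/C$. That integral claim is false precisely where you need it: your own Riccati inequality $\dot m\le -\tfrac34 m^2$ gives $|m(t)|\ge C/(1-\tfrac34 Ct)$, so $|m(t)|$ does not stay ``of order $C$'' — it diverges within the window, and $\int_0^{t^*}|m(t)|\,dt$ diverges logarithmically at $t^*=4/(3C)$. (This is consistent with BKM: the $H^s$ norm \emph{does} blow up there.) Consequently the Gr\"onwall bound on $\|u(t)\|_{H^s}$ cannot survive all the way to $t^*$, the bound $\|\HH(u)\|_{L^\infty}\le K_0$ is lost before the Riccati dynamics have time to close the contradiction, and the argument as written does not conclude.

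The paper's proof sidesteps this entirely by a pointwise sign cancellation that is not available along a fixed characteristic. It tracks the \emph{running minimum} $m(t)=\min_x u_x(x,t)=u_x(x_t,t)$, so that at $x_t$ one has $u_x(x_t)-u_x(x_t-y)\le 0$ for all $y$. Writing $\Lambda^{-1}u_x(x_t)$ with the kernel $\log(4\sin^2(y/2))$, subtracting the (zero-mean) constant $\log 4$, and adding and subtracting $u_x(x_t)$ gives
\[
\Lambda^{-1}u_x(x_t)=\frac{1}{2\pi}\int_{-\pi}^\pi \log\!\big(\sin^2(y/2)\big)\,\big(u_x(x_t)-u_x(x_t-y)\big)\,dy+\log(4)\,m(t).
\]
Both factors in the integrand are non-positive, so the integral is non-negative, and hence $\Lambda^{-1}u_x(x_t)\ge \log(4)\,m(t)$. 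This yields the \emph{self-contained} ODE inequality $\dot m \le -m^2-\log(4)\,m$, which blows up as soon as $m(0)$ is below a universal constant (one can take $C=2\log 4$, say). No Sobolev control of $u$, no BKM, and no bootstrap are needed; the nonlocal term is absorbed by $m$ itself thanks to the favorable sign at the minimum. The upshot: your Riccati framework is correct, but you should replace the $\|\HH(u)\|_{L^\infty}$ bootstrap by this kernel cancellation, which in particular also makes the threshold $C$ universal rather than dependent on $\|u_0\|_{H^s}$.
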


\begin{proof}

We consider the evolution of 
$$
m(t)=\min_{x} u_x(x,t)=u_x(x_t,t).
$$
Using a pointwise argument, we have that
$$
\frac{d}{dt}m(t)=-m^2(t)-\Lambda^{-1}u_x(x_t,t).
$$
We compute
\begin{align*}
	\Lambda^{-1}u_x(x_t)&=-\frac{1}{2\pi}\int_{-\pi}^\pi \log\left(4\sin^2\left(\frac{y}{2}\right) \right)u_x(x_t-y)dy ,\\
	&=-\frac{1}{2\pi}\int_{-\pi}^\pi \log\left(\sin^2\left(\frac{y}{2}\right) \right)u_x(x_t-y)dy ,\\ 
	&=-\frac{1}{2\pi}\int_{-\pi}^\pi \log\left(\sin^2\left(\frac{y}{2}\right) \right)(u_x(x_t-y)-u_x(x_t))dy-\frac{1}{2\pi}\int_{-\pi}^\pi \log\left(\sin^2\left(\frac{y}{2}\right) \right)u_x(x_t)dy ,\\
	&=\frac{1}{2\pi}\int_{-\pi}^\pi \log\left(\sin^2\left(\frac{y}{2}\right) \right)(u_x(x_t)-u_x(x_t-y))dy+\log(4)m(t) ,\\
	&\geq \log(4)m(t).
\end{align*}
From the previous computation we find that
$$
\frac{d}{dt}m(t)\leq -(m(t))^2-\log(4)m(t).
$$
This ODE blows up if $m(0)$ is large enough compared with $\log(4)$, which concludes the result.

\end{proof}

\begin{theo}[Blow-up for the cubic local 1D model]
	Let $u(x,t)$ be a solution of the Cauchy problem \eqref{cubiclocal}. Then, if 
	$$ \int_{-1}^1 \left(- u_0^2(x)+u_0^2(0)\right) \left( |x|^{-\delta} -1\right)  \operatorname{sign}(x) dx\geq \|u_0\|_{L^2}^2,$$
	$\partial_x u(x,t)$ blows-up in finite time.
\end{theo}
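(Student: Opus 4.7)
Following the weighted-integral blow-up strategy of \cite{CCF2005,CCF2006,DDL2009}, I would introduce the odd weight $\phi(x)=(|x|^{-\delta}-1)\operatorname{sign}(x)$ and the functional
\[
I(t)=\int_{-1}^{1}\bigl(-u^2(x,t)+u^2(0,t)\bigr)\phi(x)\,dx,
\]
which is finite for smooth $u$ when $\delta\in(0,2)$ because the cancellation $u^2(x,t)-u^2(0,t)=O(x)$ at the origin tames the singularity of $\phi$. The hypothesis of the theorem reads exactly $I(0)\geq\|u_0\|_{L^2}^2$. The plan is to derive a Riccati-type differential inequality of the form $\dot I\geq c_1 I^2-c_2\|u_0\|_{L^2}^2$, which forces $I(t)$ to blow up in finite time from such an initial datum. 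The blow-up of $u_x$ then follows because $|I(t)|\leq 2\|u(t)\|_{L^\infty}\|u_x(t)\|_{L^\infty}\int_{-1}^{1}|x|(|x|^{-\delta}-1)\,dx$ and $\|u(t)\|_{L^\infty}$ can be controlled independently via the $L^2$ dissipation $\frac{d}{dt}\|u\|_{L^2}^2=-2\|\Lambda^{-1/2}u\|_{L^2}^2\leq 0$ and the smoothing of $\Lambda^{-1}$.

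For the computation of $\dot I(t)$, note that $\partial_t u^2(0,t)$ does not contribute since $\int\phi\,dx=0$ by oddness. Using $u_t=-u^2 u_\alpha-\Lambda^{-1}u$,
\[
\dot I(t)=\frac{1}{2}\int_{-1}^{1}(u^4)_x\,\phi\,dx+2\int_{-1}^{1}u\,\Lambda^{-1}u\,\phi\,dx.
\]
A careful integration by parts on $\{|x|>\varepsilon\}$, combined with the cancellation $2u^4(0,t)-u^4(\varepsilon,t)-u^4(-\varepsilon,t)=O(\varepsilon^2)$ which kills the boundary contribution $(\varepsilon^{-\delta}-1)\cdot O(\varepsilon^2)=O(\varepsilon^{2-\delta})$ as $\varepsilon\to 0$, converts the first integral into
\[
\frac{\delta}{2}\int_{-1}^{1}\bigl(u^4(x,t)-u^4(0,t)\bigr)|x|^{-\delta-1}\,dx.
\]
The $\Lambda^{-1}$-term is a lower-order contribution that I would bound by $C\|u_0\|_{L^2}^2$ using H\"older, the estimate $\|\Lambda^{-1}u\|_{L^\infty}\lesssim\|u\|_{L^2}$ coming from the logarithmic kernel, and the non-increasing character of $\|u(t)\|_{L^2}$.

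The crucial step is to bound the signed nonlinear integral from below by a coercive quantity in $I$. Cauchy--Schwarz with weights $|x|^{-(\delta+1)/2}$ and $|x|^{(\delta+1)/2}$ yields
\[
I(t)^2\leq C_\delta\int_{-1}^{1}\bigl(u^2(x,t)-u^2(0,t)\bigr)^2 |x|^{-\delta-1}\,dx,\qquad C_\delta=\int_{-1}^1(|x|^{-\delta}-1)^2|x|^{\delta+1}\,dx<\infty,
\]
while the pointwise inequality $(u^2-u^2(0))^2\leq|u^4-u^4(0)|$, coming from $u^4-u^4(0)=(u^2-u^2(0))(u^2+u^2(0))$ and $|u^2-u^2(0)|\leq u^2+u^2(0)$, controls the right-hand side by $\int|u^4-u^4(0)||x|^{-\delta-1}\,dx$. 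The main obstacle is precisely that the nonlinear term features $u^4-u^4(0)$ without absolute value, so a sign-by-sign analysis is required to show that the positive contributions (where $|u|>|u(0)|$, amplified by the factor $u^2+u^2(0)\geq 2u^2(0)$) dominate the negative ones whenever $I$ is large. Tuning $\delta\in(0,2)$ and exploiting the initial threshold $I(0)\geq\|u_0\|_{L^2}^2$, which gives room to absorb the constant $c_2\|u_0\|_{L^2}^2$, should close the Riccati argument and conclude finite-time blow-up of $u_x$.
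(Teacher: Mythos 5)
Your proposal correctly sets up the weighted functional, the hypothesis, the integration by parts, the Riccati endgame, and the final step that bounds $|I(t)|$ by $\norm{u}_{L^\infty}\norm{u_x}_{L^\infty}$ to transfer the blow-up to $u_x$. However, you have correctly identified a genuine obstacle at the core of your argument, and I do not believe the ``sign-by-sign analysis'' you sketch can resolve it.

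The difficulty is this. Your nonlinear term after integration by parts is
\[
\frac{\delta}{2}\int_{-1}^{1}\bigl(u^4(x,t)-u^4(0,t)\bigr)|x|^{-\delta-1}\,dx
= \frac{\delta}{2}\int_{-1}^{1}\bigl(u^2-u^2(0)\bigr)^2|x|^{-\delta-1}\,dx
+\delta\,u^2(0,t)\int_{-1}^{1}\bigl(u^2-u^2(0)\bigr)|x|^{-\delta-1}\,dx.
\]
The first piece is the nonnegative square you want; the second is a genuine error term. It is not sign-definite, it is weighted by the \emph{even} weight $|x|^{-\delta-1}$ (so it sees the even part of $u^2-u^2(0)$, which is orthogonal to the odd information carried by $I(t)$), it cannot be absorbed into the square by Cauchy--Schwarz because $\int_{-1}^1|x|^{-\delta-1}\,dx=\infty$, and its natural bound near the origin involves $\norm{(u^2)_{xx}}_{L^\infty}$, i.e.\ exactly the higher norms whose blow-up you are trying to prove. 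So your Riccati inequality does not close, and the ``amplification by $u^2+u^2(0)\geq 2u^2(0)$'' heuristic works against you precisely when $u^2(x)<u^2(0)$.

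The missing ingredient is a Lagrangian recentering. The paper introduces $y(t)$ with $y'(t)=u^2(y(t),t)$, $y(0)=0$, and sets $W(x,t)=u^2(x+y(t),t)$. Multiplying the PDE by $u$ and changing variables gives
\[
\frac{d}{dt}W(x,t)=\partial_x W(x,t)\,\bigl(W(0,t)-W(x,t)\bigr)-2\,(u\,\Lambda^{-1}u)(x+y(t),t)
=-\tfrac12\,\partial_x\bigl(W(x,t)-W(0,t)\bigr)^2+\text{l.o.t.},
\]
so the convection term becomes an \emph{exact} square. Applying your weight to $L(t)=\int(-W+W(0))\phi\,dx$ and integrating by parts then yields directly
\[
\frac{\delta}{2}\int_{-1}^1\bigl(W(x,t)-W(0,t)\bigr)^2|x|^{-\delta-1}\,dx\geq C(\delta)\,L(t)^2,
\]
with no cross term to estimate. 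Since $y(0)=0$, one has $L(0)=I(0)$, so the hypothesis of the theorem is unchanged. This recentering is not a cosmetic simplification: it is what eliminates the uncontrollable term $u^2(0)\int(u^2-u^2(0))|x|^{-\delta-1}\,dx$ from the start, and it is the step your blind proposal is missing.

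Everything else in your sketch (the bound $\norm{u(t)}_{L^\infty}\leq\norm{u_0}_{L^\infty}+C\norm{u_0}_{L^2}t$ via the maximum principle, the $L^2$ dissipation, the estimate of the $\Lambda^{-1}$ contribution by $C\norm{u_0}_{L^2}^2$, the weighted Cauchy--Schwarz that converts the square into $L(t)^2$, and the transfer $L(t)\lesssim\norm{u_x(t)}_{L^\infty}$) matches the paper and would carry over verbatim once the Lagrangian shift is in place.
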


\begin{proof} This proof is based on arguments shown in \cite{DDL2009}.
	Firstly, note that we have uniform control of the $L^2$ norm by 
	$$ \norm{u}_{L^2}^2 + 2 \int_0^t \norm{\Lambda^{-1/2} u}_{L^2}^2 ds \leq \norm{u_0}_{L^2}^2.$$
	The $L^\infty$ norm is also controlled via control of the maximum and minimum values of $u$. We define 
	$$ M(t) = \max_{x\in \TT} u(x,t), \quad m(t) = \min_{x\in\TT} u(x,t).$$
	If we assume that the supremum at time $t>0$  is attained at the maximum point, i.e., 
	$$ \norm{u(\cdot,t)}_{L^\infty} = M(t),$$
	then $M(t) \ge 0$ and it holds
	$$ \frac{d}{dt}M(t) \leq \norm{\Lambda^{-1} u}_{L^\infty} \leq C\|u_0\|_{L^2}.$$
Similarly, if the supremum is attained at the minimum point, then $m(t) \leq 0$ and a similar bound holds. Consequently, 
\begin{equation}\label{eq:expgrowth}
		 \norm{u(\cdot,t)}_{L^\infty} \leq \norm{u_0}_{L^\infty} +C\|u_0\|_{L^2}t.
\end{equation}
	
	Let us define 
	$$ W(x,t) = u^2(x+y(t),t), \quad y'(t) =u^2(y(t),t),\quad y(0)=0.$$
	Then, by multiplication of equation (\ref{cubiclocal}a) by $u$, one gets 
	\begin{align*}
		 \frac{d}{dt} W(x,t) & = \partial_t u^2(x+y(t),t) + \partial_x u^2(x+y(t),t) u^2(y(t),t), \\
		 &=\partial_x u^2(x+y(t),t) \left( u^2(y(t),t)-u^2(x+y(t),t) \right) - 2 u(x+y(t),t) \Lambda^{-1} u(x+y(t),t), \\
		 & = \partial_x W(x,t) \left( W(0,t)- W(x,t)\right) -  2 u(x+y(t),t)  \Lambda^{-1} u(x+y(t),t). \\
	\end{align*}
	
	Now, given $\delta < 1/2$, we define the functional
	$$ L(t) = \int_{-1}^1 \left(- W(x,t)+W(0,t)\right) \left( |x|^{-\delta} -1\right)  \operatorname{sign}(x) dx.$$
	Then, the evolution in time of $L(t)$ is given by
	\begin{align*}
		\frac{d}{dt}L(t) &= \int_{-1}^1 \left(- \partial_ tW(x,t)+ \partial_t W(0,t)\right) \left( |x|^{-\delta} -1\right)  \operatorname{sign}(x) dx, \\
		& =  \int_{-1}^1   \partial_x W(x,t) \left( W(x,t)-W(0,t)\right)  \left( |x|^{-\delta} -1\right)  \operatorname{sign}(x) dx, \\
		&\quad + 2\int_{-1}^1 \left(  u(x+y(t),t) \Lambda^{-1} u(x+y(t),t)  - u(y(t),t)  \Lambda^{-1} u(y(t),t)    \right) \left( |x|^{-\delta} -1\right)  \operatorname{sign}(x) dx, \\
		& = I_1 + I_2. 
	\end{align*}
The first term can be estimated as
\begin{align*}
	I_1	& =  \frac{1}{2} \int_{-1}^1    \partial_x \left( W(x,t)-W(0,t)\right)^2 \left( |x|^{-\delta} -1\right)  \operatorname{sign}(x) dx, \\
	& = \frac{1}{2}\left\{  \left( W(x,t)-W(0,t) \right)^2 \left( |x|^{-\delta} -1\right)  \operatorname{sign}(x)  \right\}_{-1}^1, \\
	& \quad - \frac{1}{2}  \int_{-1}^1  \left( W(x,t)- W(0,t)\right)^2  \partial_x \left[   \left( |x|^{-\delta} -1\right)  \operatorname{sign}(x)  \right] dx.
\end{align*}
In fact, we compute
\begin{align*}
	I_1		& =  \delta \frac{1}{2}  \int_{-1}^1  \left( W(x,t)- W(0,t)\right)^2    |x|^{-\delta-1} dx, \\
	& \ge \delta \frac{1}{2} \left( \int_{-1}^1 \left\vert-W(x,t)+ W(0,t)\right\vert |x|^{-\frac{\delta+1}{2}} dx \right)^2, \\
	& \ge  \delta \frac{1}{2} \left( \int_{-1}^1 \left\vert-W(x,t)+ W(0,t)\right\vert  \left( |x|^{-\delta}-1\right)dx \right)^2, \\
	& \ge C(\delta) L(t)^2.
\end{align*}
The second term is bounded by
\begin{align*}
	I_2 &=  2\int_{-1}^1 \left( - u(x+y(t),t) \Lambda^{-1} u(x+y(t),t)  + u(y(t),t)  \Lambda^{-1} u(y(t),t)    \right) \left( |x|^{-\delta} -1\right)  \operatorname{sign}(x) dx, \\
	& \leq \norm{u}_{L^2} \norm{\Lambda^{-1} u}_{L^\infty} \norm{|x|^{-\delta}-1}_{L^2{(-1,1)}}, \\
	& \leq C(\delta) \norm{u_0}_{L^2}^2, \\
\end{align*}
as long as $\delta < 1/2$.

Both estimates provide 
\begin{equation} \label{eq:riccati}
	 \frac{d}{dt} L(t) \ge C(\delta) \left(  L(t)^2- \norm{u_0}_{L^2}^2 \right).
\end{equation}
Assuming that 
$$  L(0)- \norm{u_0}_{L^2}  >0,$$ the inequality \eqref{eq:riccati} implies finite-time blow up of $L(t)$. Note that 
 $$L(t) \lesssim \norm{ \partial_x W(\cdot,t)}_{L^\infty} \lesssim \norm{\partial_x u^2(\cdot,t)}_{L^\infty} \lesssim  \norm{u_x(\cdot,t)}_{L^\infty},$$
 which translates into finite-time blow-up of $u_x(x,t)$.

\end{proof}

\begin{theo}[Blow-up for the cubic non-local 1D model]
	Let $u(x,t)$ be a solution of the Cauchy problem \eqref{cubicnonlocal2}. Then, if the initial data $u_0(x)$ has odd symmetry and we fix $\delta <1/2,$ $\Lambda^{\delta} u(x,t)$ blows up in finite time.
\end{theo}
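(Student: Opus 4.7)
The plan is to adapt the pointwise blow-up arguments of \cite{CCF2005,CCF2006,DDL2009} by exploiting the odd symmetry of the solution, which is preserved by \eqref{cubicnonlocal2}: if $u$ is odd, then $u^2$ is even, so $\mathcal{H}(u^2)$ is odd; since $u_\alpha$ is even, the transport term $\mathcal{H}(u^2) u_\alpha$ is odd; combined with the parity-preserving $\Lambda^{-1}$, the right-hand side is odd. Consequently $u(0, t) = 0$, $u_{xx}(0, t) = 0$, $\mathcal{H}(u^2)(0, t) = 0$ and $\Lambda^{-1}u(0,t)=0$, so that the origin is a stagnation point of the transport velocity $\tfrac{1}{2}\mathcal{H}(u^2)$.

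Next, I would derive a pointwise Riccati-type inequality at $x = 0$. Differentiating \eqref{cubicnonlocal2} in $\alpha$, using the identity $\partial_x \mathcal{H} = \Lambda$ (so that $\mathcal{H}(u u_x) = \tfrac{1}{2} \Lambda(u^2)$), and evaluating at $\alpha = 0$ (where all the cancellations above apply) yields
\begin{equation*}
\frac{d}{dt} u_x(0, t) = -\frac{1}{2} \Lambda(u^2)(0, t)\, u_x(0, t) + \mathcal{H} u(0, t).
\end{equation*}
For $u$ odd with $u(0) = 0$, the singular-integral representation of $\Lambda$ (see \eqref{lambda}) gives
\begin{equation*}
-\frac{1}{2}\Lambda(u^2)(0, t) = \frac{1}{4\pi}\int_{-\pi}^{\pi} \frac{u^2(y, t)}{2 \sin^2(y/2)}\, dy \geq c_0\, u_x(0, t)^2,
\end{equation*}
for a universal $c_0 > 0$, since $u(y)^2/\sin^2(y/2) \to 4 u_x(0, t)^2$ as $y \to 0$. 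The lower-order forcing $\mathcal{H} u(0, t)$ is bounded by $C \|u(t)\|_{L^\infty}$, and $\|u(t)\|_{L^\infty}$ grows at most exponentially in time by the pointwise argument at the supremum and infimum of $u$ (the transport term vanishes at critical points, and $\Lambda^{-1}$ is bounded on $L^\infty$ since its kernel is in $L^1$), analogous to the argument leading to \eqref{eq:expgrowth}. Thus $u_x(0, t)$ satisfies $\tfrac{d}{dt} u_x(0, t) \geq c_0 u_x(0, t)^3 - C(t)$ with $C(t)$ of at most exponential growth, forcing finite-time blow-up whenever $|u_x(0, 0)|$ is sufficiently large.

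The main obstacle will be promoting this pointwise Lipschitz blow-up into blow-up of the fractional norm $\|\Lambda^\delta u\|_{L^\infty}$ for every $\delta < 1/2$. A cleaner route is to replace $u_x(0, t)$ with a weighted integral functional of the form
\begin{equation*}
F(t) = -\int_0^\pi \frac{u(x, t)}{\sin^{1+s}(x/2)}\, dx
\end{equation*}
for a suitable $s \in (0, \delta)$. For odd $u$ with $u(0) = 0$, a direct computation using $|u(x)| \leq \|u\|_{\dot C^\delta}|x|^\delta$ gives $|F(t)| \leq C(s, \delta) \|u\|_{\dot C^\delta} \lesssim \|\Lambda^\delta u\|_{L^\infty}$, so that divergence of $F(t)$ forces divergence of $\|\Lambda^\delta u(t)\|_{L^\infty}$. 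An analogous derivation, based on testing \eqref{cubicnonlocal2} against the weight $-\sin^{-(1+s)}(x/2)\mathbf{1}_{[0,\pi]}$, integration by parts with vanishing boundary terms (thanks to oddness and periodicity), the anti-self-adjointness of $\mathcal{H}$, and a Cauchy--Schwarz estimate producing the coercive $c F(t)^2$ term from the $\mathcal{H}(u^2) u_x$ contribution, should yield $F'(t) \geq c F(t)^2 - C(t)$. The main technical difficulty lies precisely in verifying this coercive lower bound; the careful choice of $s$ and use of symmetrization around the origin (in the spirit of \cite{CCF2006,DDL2009}) are expected to close the argument, delivering finite-time blow-up of $F(t)$ and hence of $\|\Lambda^\delta u(t)\|_{L^\infty}$.
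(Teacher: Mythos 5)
Your proposal takes a genuinely different route from the paper, but both of your routes have gaps that the paper's argument avoids.

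Your first route aims for a Riccati ODE for $u_x(0,t)$. The symmetry cancellations and the identity $\tfrac{d}{dt}u_x(0,t)=-\tfrac12\Lambda(u^2)(0,t)\,u_x(0,t)+\mathcal H u(0,t)$ are fine, and it is also correct that $-\tfrac12\Lambda(u^2)(0,t)=\tfrac1{4\pi}\int_{-\pi}^\pi \tfrac{u^2(y,t)}{2\sin^2(y/2)}\,dy\ge 0$. The gap is the claimed coercivity
\begin{equation*}
\frac{1}{4\pi}\int_{-\pi}^{\pi}\frac{u^2(y,t)}{2\sin^2(y/2)}\,dy \;\ge\; c_0\,u_x(0,t)^2,
\end{equation*}
which does \emph{not} follow from the pointwise limit $u(y)^2/\sin^2(y/2)\to 4u_x(0,t)^2$. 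One can have $u$ odd with $u_x(0)=N$, $\|u\|_{L^\infty}\le 1$, $u$ essentially linear only on $|y|\lesssim 1/N$; then the integral is $O(N)$, not $O(N^2)$, so the inequality fails by a factor of $N$ and the Riccati structure $\tfrac{d}{dt}u_x(0)\gtrsim u_x(0)^3-C(t)$ is unjustified. Without an a priori length scale for the region where $u$ is close to its tangent line, you cannot upgrade the local expansion to a global integral bound.

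Your second route, testing against the weight $-\sin^{-(1+s)}(x/2)\mathbf 1_{[0,\pi]}$ to form the \emph{linear} functional $F(t)=-\int_0^\pi u(x,t)\sin^{-(1+s)}(x/2)\,dx$, has a degree mismatch that you acknowledge but do not resolve. The transport contribution $\tfrac12\mathcal H(u^2)u_\alpha$ is cubic in $u$, while $F(t)^2$ is only quadratic; there is no natural mechanism making the cubic term control $cF(t)^2$ from below. This is exactly why the paper works instead with the \emph{quadratic} functional
\begin{equation*}
J(t)=\int_0^\infty \frac{u^2(x,t)}{|x|^{1+\delta}}\,dx,
\end{equation*}
which, once you multiply the PDE by $u$ and observe that $v:=u^2$ satisfies an equation with a $\mathcal H(v)\,\partial_x v$ structure, puts you squarely in the Córdoba--Córdoba--Fontelos framework: the CCF inequality gives $-\tfrac12\int_0^\infty \mathcal H(u^2)\partial_x(u^2)|x|^{-(1+\delta)}\,dx \ge \tfrac12\int_0^\infty u^4|x|^{-(2+\delta)}\,dx$, Hölder yields $\int u^4|x|^{-(2+\delta)}\gtrsim J(t)^2$, and the linear forcing $-2\int u\,\Lambda^{-1}u\,|x|^{-(1+\delta)}$ is handled (after subtracting the constant $\Lambda^{-1}u(0)$, which is allowed because $u$ is odd) using the $\dot C^{1/2}$ smoothing of $\Lambda^{-1}$ and the exponential $L^\infty$ bound. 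You correctly identified the exponential-in-time $L^\infty$ control and the role of odd symmetry, and the idea of a singularly weighted functional is the right one, but the functional must be quadratic in $u$ for the degrees to match.
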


\begin{proof}
If we repeat the previous pointwise estimate, we find that
	$$ \frac{d}{dt}M(t) \leq \norm{\Lambda^{-1} u}_{L^\infty} \leq CM(t).$$
	Using the same idea for $m(t)$, we conclude
	$$ \norm{u(\cdot,t)}_{L^\infty} \leq \norm{u_0}_{L^\infty} e^{Ct}.$$
As the initial data is odd and such symmetry is preserved by the evolution, the resulting solution is also odd. As a consequence, it conserves the mean as in the original problem \eqref{Stokes}.		
	Let us define 
	$$ J(t) = \int_0^\infty \frac{u^2}{|x|^{1+\delta}} dx .$$
	for some $\delta<\frac{1}{2}$ to be fixed later. 
	Then, by Cordoba-Cordoba-Fontelos inequality \cite{CCF2005}
	\begin{align*}
		\frac{d}{dt} J(t) &=  \int_0^\infty \frac{\partial_t u^2}{|x|^{1+\delta}} dx, \\
		& = - \frac{1}{2} \int_0^\infty \frac{\mathcal{H}(u^2) \partial_x(u^2)}{|x|^{1+\delta}} dx-  2 \int_0^\infty \frac{u \Lambda^{-1}u }{|x|^{1+\delta}} dx,\\
		& \ge \frac{1}{2} \int_0^\infty \frac{u^4}{|x|^{2+\delta}} dx -2  \int_0^\infty \frac{u \left( \Lambda^{-1}u(x)-\Lambda^{-1}u(0) \right) }{|x|^{1+\delta}} dx.
	\end{align*}
	Note that 
	\begin{align*}
		\left\vert  \Lambda^{-1}u(x)-\Lambda^{-1}u(x+h) \right\vert & \leq |h|^{1/2} \seminorm{\Lambda^{-1} u}_{\dot C^{1/2}}, \\
		& \lesssim |h|^{1/2} \norm{u_0}_{L^\infty} e^{Ct} .
	\end{align*}
	Furthermore, 
	\begin{align*}
		-	 \int_0^\infty \frac{u \left( \Lambda^{-1}u(x)-\Lambda^{-1}u(0) \right) }{|x|^{1+\delta}} dx & = - \int_0^L \frac{u \left( \Lambda^{-1}u(x)-\Lambda^{-1}u(0) \right) }{|x|^{1+\delta}} dx-  \int_L^\infty \frac{u \left( \Lambda^{-1}u(x)-\Lambda^{-1}u(0) \right) }{|x|^{1+\delta}} dx, \\
		& \ge -C_1 \norm{u_0}_{L^\infty}^2 e^{2C_2t} \int_0^L \frac{1}{|x|^{1/2 + \delta}} dx - C_1 \norm{u_0}_{L^\infty}^2 e^{2C_2t} \int_T^\infty \frac{1}{x^{1+\delta}} dx, \\
		& \ge - C \norm{u_0}_{L^\infty}^2 e^{Ct}.
	\end{align*}
By Holder inequality, 
$$ \int_0^\infty \frac{u^4}{x^{2+\delta}} dx \ge C \left(\int_0^\infty \frac{u^2}{x^{1+\delta}} dx \right)^2  .$$
	
All together, we have that 
	\begin{equation*}
		\frac{d}{dt} J(t) \ge C(\delta,\norm{u_0}_{L^\infty}) \left( J^2(t) -  e^{Ct} \right).
	\end{equation*}
Then, $J(t)$ blows-up in finite time for certain large enough initial data $\norm{u_0}_{L^\infty}$.

\end{proof}

\subsection*{Acknowledgments.} 		
FG and ES were partially supported by the AEI grants EUR2020-112271, PID2020-114703GB-I00 and PID2022-140494NA-I00. FG was partially supported by the AEI grant RED2022-134784-T funded by MCIN/AEI/10.13039/501100011033, by the Fundacion de Investigacion de la Universidad de Sevilla through the grant FIUS23/0207 and acknowledges support from IMAG, funded by MICINN through the Maria de Maeztu
Excellence Grant CEX2020-001105-M/AEI/10.13039/501100011033. 
RGB is funded by the project "An\'alisis Matem\'atico Aplicado y Ecuaciones Diferenciales" Grant PID2022-141187NB-I00 funded by MCIN /AEI /10.13039/501100011033 / FEDER, UE and acronym "AMAED". This publication is part of the project PID2022-141187NB-I00 funded by MCIN/ AEI /10.13039/501100011033. ES acknowledges support from the Max Planck Institute for Mathematics in the Sciences.

\bibliographystyle{abbrv}

\end{document}